\definecolor{darkgreen}{rgb}{0,0.5,0}
\definecolor{darkred}{rgb}{0.7,0,0}
\theoremstyle{plain}
\numberwithin{equation}{section}
\newcommand{\pl}[2]{{\frac{\partial #1}{\partial #2}}}
\newcommand{\pll}[2]{{\frac{\partial^2 #1}{\partial #2^2}}}
\newcommand{\ti}{\tilde}
\newcommand{\al}{\alpha}
\newcommand{\de}{\delta}
\newcommand{\vph}{\varphi}
\newcommand{\ep}{\varepsilon}
\newcommand{\R}{\ensuremath{{\mathbb R}}}
\newcommand{\N}{\ensuremath{{\mathbb N}}}
\newcommand{\weakto}{\rightharpoonup}
\newcommand{\downto}{\downarrow}
\newcommand{\upto}{\uparrow}
\newcommand{\lap}{\Delta}
\newcommand{\union}{\cup}
\DeclareMathOperator{\Vol}{Vol}
\def\blbox{\quad \vrule height7.5pt width4.17pt depth0pt}
\newcommand{\beq}{\begin{equation}}
\newcommand{\eeq}{\end{equation}}
\newcommand{\beqa}{\begin{equation}\begin{aligned}}
\newcommand{\eeqa}{\end{aligned}\end{equation}}
\newcommand{\brmk}{\begin{rmk}}
\newcommand{\ermk}{\end{rmk}}
\newcommand{\partref}[1]{\hbox{(\csname @roman\endcsname{\ref{#1}})}}
\newcommand{\half}{\frac{1}{2}}
\newcommand{\cmt}[1]{\opt{draft}{{\color[rgb]{0.5,0,0}
{$\LHD$ {#1} $\RHD$\marginpar{\blbox}}}}}
 \newtheorem{thm}{Theorem}[section]
\newtheorem{lem}[thm]{Lemma}
\newtheorem{rmk}[thm]{Remark}
\newcommand{\norm}[1]{\left\Vert#1\right\Vert}
\newcommand{\abs}[1]{\left\vert#1\right\vert}
\newcommand{\set}[1]{\left\{#1\right\}}
\newcommand{\Real}{\mathbb R}
\title{\sc uniqueness of ricci flows from nonatomic radon measures on riemann surfaces}
\author{Peter M. Topping and Hao Yin}
\date{14 June 2023}
\begin{document}

%

\parskip 8pt
\parindent 0pt

\maketitle

\begin{abstract}
In previous work \cite{TY3} we established the existence of a Ricci flow starting with a Riemann surface coupled with a nonatomic Radon measure as a conformal factor. 
In this paper we prove uniqueness, settling Conjecture 1.3 from \cite{TY3}. 
Combining these two works yields a canonical smoothing of 
such rough surfaces that also regularises their geometry at infinity.
\end{abstract}

\section{Introduction}

Suppose $M$ is a two-dimensional smooth manifold with a conformal structure. By assumption our manifolds are connected but not supposed to be compact, unless otherwise specified.
The conformal structure can be viewed as an equivalence class of Riemannian metrics 
where two metrics are deemed to be equivalent if they differ only by multiplication by a smooth function. The Ricci flow can be viewed as a way of evolving a representative $g$ of the conformal structure under the PDE
\beq
\label{RF_eq}
\pl{g}{t}=-2K_g g,
\eeq
where $K_g$ is the Gauss curvature of $g$. If we pick local isothermal coordinates $x$ and $y$, and write the metric $g=u(dx^2+dy^2)$, then the conformal factor $u$ evolves under the logarithmic fast diffusion equation
\beq\label{RF_eq_u}
\pl{u}{t}=\lap\log u.
\eeq
Here, and throughout this paper, $\lap:=\pll{}{x}+\pll{}{y}$ is the Laplacian with respect to the isothermal coordinates that we chose.

There is an extensive theory of Ricci flow in two dimensions that we briefly survey.
In the case that the underlying surface $M$ is closed, Hamilton and Chow 
\cite{ham_surf, chow}
proved existence and uniqueness of smooth solutions $g(t)$ on time intervals $[0,T)$ 
once a smooth initial metric $g(0)$ has been prescribed, and the optimal existence time $T$ was determined. A theory in which rough initial data was prescribed, instead of a smooth initial metric, was developed by Guedj and Zeriahi (existence \cite{GZ})
and Di Nezza and Lu (uniqueness \cite{DL}). Their theory was again constrained to closed manifolds, but applied to more general K\"ahler Ricci flows.

The nature of the well-posedness question changes completely once the compactness of the underlying manifold is no longer assumed. The focus changes to the problem of how Ricci flow might lose information at spatial infinity, or be influenced by information coming in from infinity. Intuition from classical PDE theory can be misleading because the behaviour of solutions is heavily influenced by the fact that the solution and the underlying geometry are the same thing.

The smooth existence problem on general surfaces (without any assumption of compactness) was solved by Giesen and the first author \cite{GT2}, following the programme in \cite{JEMS}. There it was proved that on a general smooth surface $M$, if we are given a smooth Riemannian metric $g_0$, which is not assumed to be complete and is not assumed to have bounded curvature, there exists a smooth Ricci flow $g(t)$ for $t$ in an optimal time interval $[0,T)$, with $g(0)=g_0$, and so that $g(t)$ is complete for all $t\in (0,T)$. This \emph{instantaneously complete} Ricci flow was \emph{maximally stretched} in the sense that if $\ti g(t)$, $t\in [0,\ti T)$, was any other smooth Ricci flow on $M$, with $\ti g(0)\leq g(0)$ and with $\ti g(0)$ and $g(0)$ in the same conformal class (but not assuming completeness of $\ti g(t)$) then $\ti g(t)\leq g(t)$ for 
$t\in [0,T\wedge \ti T)$.

This existence theory was generalised to rough initial data by the present authors in 
\cite{TY3}. The essential idea there was that instead of prescribing the initial data as a smooth Riemannian surface, which could be viewed equivalently as a manifold with a conformal structure and a smooth function as a conformal factor, we replaced the smooth conformal factor by a Radon measure $\mu$ on $M$ with the property that 
$\mu(\{x\})=0$ for all points $x\in M$. Once that relaxation had been made, it was necessary to weaken what it meant to satisfy the initial data. We say that a Ricci flow $g(t)$, $t\in (0,T)$, attains $\mu$ as initial data (weakly) if the volume measures $\mu_{g(t)}$ converge weakly to $\mu$, written $\mu_{g(t)}\weakto \mu$.
This means that for all $\psi\in C_c^0(M)$ we have
\beq
\label{weak_sense}
\int_M \psi d\mu_{g(t)} \to \int_M \psi d\mu,\quad\text{ as }t\downto 0.
\eeq
This is weak-* convergence when viewed in the dual of 
$C_c^0(M)$.
In \cite{TY3}, we proved existence of such a Ricci flow on an optimal time interval
that we will incorporate into our main theorem \ref{main_exist_thm} below.

This discussion addresses the existence problem in full generality, but says nothing about the problem of uniqueness that is so important in applications, except on closed manifolds as discussed above. In general dimension if one works in the class of smooth  solutions and initial data that is complete and of uniformly bounded curvature then uniqueness was proved by Chen and Zhu \cite{chenzhu}, with a simplified  proof given by 
Kotschwar \cite{kotschwar}. Once the hypothesis of bounded curvature is fully dropped, the problem becomes harder. In his influential paper \cite{strong_uniqueness},
B.-L Chen proved that the stationary solution is the only smooth complete Ricci flow starting with  Euclidean space in two or three dimensions. An audacious conjecture that is not currently generally accepted would be that any two complete Ricci flows $g_i(t)$, $t\in [0,T_i)$, for $i=1,2$, on the same underlying manifold and with the same initial metric $g_1(0)=g_2(0)$ must agree for $t\in [0,T_1\wedge T_2)$, in any dimension.
This conjecture was proved by the first author for two-dimensional $M$ in \cite{ICRF_UNIQ} but the question in general dimension is wide open.

In this paper we return to two-dimensional $M$ and consider 
the uniqueness problem  with initial data  a nonatomic Radon measure $\mu$ attained as described in the weak sense \eqref{weak_sense} above.
Even for \emph{smooth} initial data attained in a weak sense the uniqueness problem is far from the smooth theory.
As an illustration of this, in our previous work \cite[Theorem 1.5]{TY3}
we  proved nonuniqueness for Ricci flow starting with the Euclidean plane if the solution 
$g(t)$, $t\in (0,T)$, is asked to attain the initial data in the sense that the Riemannian distance converges  locally uniformly as $t\downto 0$.


Nevertheless, in this paper we solve the problem for Radon measure $\mu$ initial data by proving the uniqueness part of the following main theorem, which incorporates the existence from \cite{TY3}.
We use the notation $\ti M$ for the universal cover of $M$, and $\ti\mu$ for the corresponding lift of $\mu$ to $\ti M$. We write $D$ for the unit disc in the plane, equipped with the standard conformal structure.


\begin{thm}[Main existence and uniqueness theorem]
\label{main_exist_thm}
Let $M$ be a two-dimensional smooth manifold equipped with a conformal structure, 
and let $\mu$ be a Radon measure on $M$ that is nonatomic in the sense that
$$\mu(\{x\})=0\quad\text{ for all }x\in M.$$
Define $T\in [0,\infty]$ by
\begin{itemize}
\item
$T=\infty$ if $\ti M=D$;
\item
$T=\frac{1}{4\pi}\ti\mu(\ti M)$ if $\ti M=\R^2$;
\item
$T=\frac{1}{8\pi}\ti\mu(\ti M)$ if $\ti M=S^2$.
\end{itemize}
Then there exists a smooth complete conformal Ricci flow $g(t)$ on $M$, for $t\in (0,T)$,
attaining $\mu$ as initial data in the sense that 
$$\mu_{g(t)}\weakto \mu\text{ as }t\downto 0$$
and so that if $\ti g(t)$, $t\in (0,\ti T)$, is any other smooth complete conformal Ricci flow on $M$ that attains $\mu$ as initial data in the same sense, then 
$\ti T\leq T$ and 
$$g(t)\equiv \ti g(t)\qquad\text{ for all }t\in (0,\ti T).$$
If $T\in (0,\infty)$ then $\mu_{g(t)}(M)=(1-\frac{t}{T})\mu(M)$ for all $t\in (0,T)$.
\end{thm}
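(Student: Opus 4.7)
I would lift both flows $g$, $\tilde g$ to the universal cover $\tilde M$, where they become $\pi_1(M)$-invariant smooth complete Ricci flows attaining the lifted Radon measure $\tilde\mu$. Since $\tilde M\in\{D,\R^2,S^2\}$, proving $g\equiv\tilde g$ on $M$ is equivalent to proving the lifted flows agree on $\tilde M$, after which equivariance restores the result. Writing the lifted flows as $g=u\,g_0$ and $\tilde g=\tilde u\,g_0$ for a fixed reference conformal metric $g_0$ on $\tilde M$, both conformal factors solve the logarithmic fast diffusion equation \eqref{RF_eq_u} (or its round-sphere analogue), and the task reduces to proving $u\equiv\tilde u$ on $(0,T\wedge \tilde T)$.

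The two one-sided comparisons are handled separately. For the upper bound $\tilde u\le u$, I would use that the existence flow $g$ from \cite{TY3} is constructed as a limit of smooth instantaneously complete, maximally stretched Ricci flows starting from smooth mollifications of $\mu$, and that this maximally stretched property survives in the limit (cf.\ \cite{JEMS,GT2}). Consequently any smooth Ricci flow $h$ satisfying $h(s)\le g(s)$ at some $s\ge 0$ satisfies $h\le g$ for all later times on the common existence interval. Applied to $h=\tilde g$ at a small positive time $s$, together with a passage to the limit $s\downto 0$ controlled by local upper barriers on $\tilde u(\cdot,s)$ in terms of $\mu$ established in \cite{TY3} for any flow attaining $\mu$ weakly, this yields $\tilde u\le u$.

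The main obstacle, and the core of the paper, is the reverse inequality $u\le \tilde u$. Here there is no direct maximum-principle input from the initial data; the heuristic is to convert the equality of the limiting initial measures into strong pointwise information and exploit the completeness of $\tilde g$. Since $u\ge\tilde u>0$ and both $u(\cdot,t)\,d\vol_{g_0}\weakto\tilde\mu$ and $\tilde u(\cdot,t)\,d\vol_{g_0}\weakto\tilde\mu$, the nonnegative difference $w:=u-\tilde u$ satisfies $w(\cdot,t)\,d\vol_{g_0}\weakto 0$ as $t\downto 0$. On regions where $u$ and $\tilde u$ are locally bounded above and below, $w$ solves a uniformly parabolic equation whose leading part may be written as $\lap\log(1+w/\tilde u)$, and a strong parabolic maximum principle combined with the weak vanishing at $t=0$ should propagate $w\equiv 0$ forward in time, provided no contribution enters from spatial infinity. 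Ruling out such inflow is precisely where the completeness of $\tilde g$ is essential: it provides coercive lower bounds on $\tilde u$ far from any compact set, allowing the maximum principle to be applied on a growing exhaustion and the identity $w\equiv 0$ to be recovered in the limit.

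Once $u\equiv\tilde u$ on $(0,T\wedge\tilde T)$ is established, the inequality $\tilde T\le T$ follows in the finite-volume cases $\tilde M\in\{\R^2,S^2\}$ from the volume identity $\mu_{g(t)}(M)=(1-t/T)\mu(M)$, since $\mu_{\tilde g(t)}(M)$ must remain positive for $t<\tilde T$; in the hyperbolic case $\tilde M=D$ we have $T=\infty$ and there is nothing to prove. I expect the hardest technical step to be exactly this upgrade from the weak vanishing of $w(\cdot,t)\,d\vol_{g_0}$ to the pointwise identity $w\equiv 0$, uniform up to spatial infinity, in a setting where $\mu$ is only a nonatomic Radon measure and where classical parabolic regularity down to $t=0$ is unavailable.
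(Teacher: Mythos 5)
There is a genuine gap in both halves of your uniqueness argument. For the direction $\ti g\leq g$ you propose to invoke the maximally stretched property of the existence flow $g$ at a small positive time $s$ and then let $s\downto 0$, "controlled by local upper barriers on $\ti u(\cdot,s)$ in terms of $\mu$". But weak attainment $\mu_{\ti g(t)}\weakto\mu$ gives no pointwise ordering $\ti u(s)\leq u(s)$ at any positive time, and no upper barrier of the kind you invoke is available in \cite{TY3} for an arbitrary flow attaining $\mu$ only weakly; the nonuniqueness example \cite[Theorem 1.5]{TY3} illustrates how little pointwise control weak attainment provides, and the paper stresses that even for smooth $\mu$ this step is not covered by \cite{GT2,ICRF_UNIQ}. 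This is precisely what Section \ref{max_stretch_sect} exists for: the comparison is carried out not on conformal factors but on potentials $\vph$ with $\lap\vph=u$, $\pl{\vph}{t}=\log\lap\vph$, whose initial trace $\vph_0$ \emph{is} attained pointwise (via the mean-value characterisation \eqref{vph_0_char_new} and the space-time upper semicontinuity Lemma \ref{upper_semi_lem}), and the maximum principle is run on exhausting discs with the boundary blow-up estimate \eqref{boundary_control} standing in for boundary data. Your sketch contains no substitute for this mechanism.

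For the reverse direction you correctly identify that inflow from infinity must be excluded using lower bounds on $\ti u$ coming from completeness, but a strong parabolic maximum principle cannot be launched from the merely measure-theoretic vanishing of $w(\cdot,t)\,dx$ as $t\downto 0$: $w$ need not extend continuously to $t=0$ and has no interior zero at positive times to propagate. What the paper actually uses are quantitative integral estimates for $\int_{B_r}(u-\ti u)$: on the disc, Lemma 3.3 of \cite{ICRF_UNIQ} applied on $[\ep,t]$, whose error term vanishes as $R\upto 1$; on the plane, the Rodriguez--Vazquez--Esteban-type Lemma \ref{initial_R2_lem}, whose hypothesis $\ti u\geq \ep t/(|x|\log|x|)^2$ comes from \cite[Lemma 4.4]{TY3} together with Remark \ref{uovert_mono}, and whose error vanishes as $R\to\infty$. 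Sending $s\downto 0$ (using \eqref{eqn:initial}) and then exhausting gives $\int_{B_r}(u-\ti u)(t)=0$, which with $u\geq\ti u$ yields equality; some estimate of this quantitative type is needed in place of your soft argument. You also misplace the main novelty (it is the first direction, not the second, that requires the new potential machinery), you never treat the case $\ti M=S^2$, which the paper handles via the Di Nezza--Lu argument in Appendix \ref{sec:s2}, and you omit the degenerate case $T=0$ on the plane, where uniqueness amounts to a nonexistence statement.
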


Note that even in the case that $\mu$ is smooth, i.e. is the volume measure of a smooth conformal metric on $M$, this theorem has been open until this point, and is not contained in the uniqueness theorem of \cite{ICRF_UNIQ}, because the initial data is only assumed to be attained by $\ti g(t)$ in the weak sense.


Observe that in the isolated cases in which $T=0$, the existence part of the theorem above is vacuous but the uniqueness is not. We must still prove nonexistence of an alternative solution.

The solution constructed in Theorem \ref{main_exist_thm} is the largest in the sense described in the following theorem.
\begin{thm}[Ordered initial data gives ordered Ricci flows]
\label{ordered_thm}
Let $M$, $\mu$ and $T$ be as in Theorem \ref{main_exist_thm}, and let 
$g(t)$ be the unique Ricci flow constructed in that theorem.
Suppose now that $\nu$ is any other Radon measure on $M$ with $\nu\leq \mu$,
and $\ti g(t)$, $t\in (0,\ti T)$, is any smooth conformal Ricci flow  on $M$ (possibly incomplete)
attaining $\nu$ as initial data. Then $\ti T\leq T$ and 
$$g(t)\geq \ti g(t)$$
for all $t\in (0,\ti T)$.
\end{thm}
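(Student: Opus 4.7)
The strategy is to combine the maximally stretched property of the Ricci flows built in \cite{GT2} with the construction of $g$ in \cite{TY3} as a monotone limit of smooth maximally stretched Ricci flows from smoothings of $\mu$. First I would reduce to the simply connected case by lifting $g$, $\tilde g$, $\mu$ and $\nu$ to the universal cover: pointwise comparison of conformal factors, smoothness, conformality and weak attainment of a Radon initial measure all commute with the covering projection, so we may assume $M\in\{D,\R^2,S^2\}$ with its standard conformal structure and $\tilde\nu\leq\tilde\mu$.

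For the main inequality $g(t)\geq \tilde g(t)$ my plan is to fix a small $\tau\in(0,\tilde T)$, observe that $\tilde g(\tau)$ is a smooth (possibly incomplete) metric, and apply \cite{GT2} to produce the instantaneously complete maximally stretched Ricci flow $h_\tau$ with $h_\tau(0)=\tilde g(\tau)$. The maximally stretched property gives $\tilde g(\tau+s)\leq h_\tau(s)$ wherever both are defined. Since a maximally stretched flow dominates any smooth competitor with smaller initial data, two instantaneously complete maximally stretched Ricci flows with pointwise ordered smooth initial data are themselves pointwise ordered; hence once we know $\tilde g(\tau)\leq g(\tau)$ for arbitrarily small $\tau$ it follows that $h_\tau(s)\leq g(\tau+s)$, and letting $\tau\downto 0$ yields the desired inequality on $(0,\tilde T)$.

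The hard step is exactly the pointwise comparison $\tilde g(\tau)\leq g(\tau)$ at small positive time. My plan is to approximate $\mu$ from above by smooth volume measures $\mu_\delta$ so that the associated smooth instantaneously complete maximally stretched Ricci flows $g_\delta$ of \cite{GT2} decrease to $g$ as $\delta\downto 0$, mirroring the construction of $g$ used in \cite{TY3}. For fixed $\delta$ the smooth maximally stretched property of $g_\delta$ reduces $\tilde g(\tau)\leq g_\delta(\tau)$ to upgrading the weak bound $\mu_{\tilde g(\tau)}\weakto\nu\leq\mu\leq\mu_\delta$ into a pointwise bound at positive time; this is the principal technical obstacle, and I expect it to require a local upper bound for the conformal factor of a two-dimensional smooth Ricci flow in terms of a nearby integrated mass of the type appearing in \cite{JEMS,GT2}, so that weak control of $\mu_{\tilde g(t)}$ at $t=0$ translates into pointwise control of $\tilde g(\tau)$ by the smooth metric $g_\delta(\tau)$. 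Once this is in hand, sending first $\tau\downto 0$ and then $\delta\downto 0$ delivers $\tilde g(t)\leq g(t)$ for all $t\in(0,\tilde T)$.

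Finally, the bound $\tilde T\leq T$ can be read off from the inequality just established: for $T<\infty$ the canonical flow $g$ exhausts its total area by time $T$ (by Gauss-Bonnet in the $S^2$ case and the corresponding volume decay in the $\R^2$ case), and if $\tilde T>T$ then $\tilde g(t)$ would remain smooth and positive as $t\upto T$ while being dominated by $g(t)$, which is incompatible with the degeneration of $g$; alternatively the same conclusion follows directly from the volume monotonicity of $\tilde g$ combined with $\mu_{\tilde g(t)}\weakto\nu$ and $\nu(M)\leq\mu(M)$.
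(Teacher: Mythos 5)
Your overall architecture (lift to the universal cover, reduce to comparing with smooth maximally stretched flows from smoothings of $\mu$, read off $\ti T\leq T$ from volume decay) is reasonable, but the proposal has a genuine gap at exactly the step you flag as ``the principal technical obstacle'': the comparison $\ti g(\tau)\leq g_\delta(\tau)$ at small positive time. Since $\ti g$ attains $\nu$ only weakly, the maximally stretched property of $g_\delta$ from \cite{GT2} is not applicable (it requires a competitor whose initial metric is pointwise below the smooth initial metric of $g_\delta$), and the estimate you hope to import from \cite{JEMS,GT2} does not exist in the required form: a local bound of the conformal factor at time $\tau$ by nearby integrated mass would at best give $\ti u(\tau)\lesssim r^{-2}\bigl(\mu_\delta(B_r)+\tau\bigr)$, i.e.\ control by an \emph{average} of $u_\delta$ with a multiplicative constant, which is not the pointwise domination $\ti u(\tau)\leq u_\delta(\tau)$ you need before you can restart the comparison from time $\tau$; moreover the 2D logarithmic fast diffusion equation is critical and admits no local $L^1$--$L^\infty$ smoothing of this kind. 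Converting weakly attained, ordered initial data into a pointwise ordering at positive times is precisely the analytic core of the paper, and it is not a citation to earlier work.

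The paper avoids this direct comparison altogether by a different mechanism, which is absent from your plan. First, Theorem \ref{DC_max_stretch_thm} (whose proof rests on the potential comparison of Lemma \ref{core_lem}, via Lemmas \ref{potential_construct_lem}--\ref{upper_semi_lem}) is applied with the measure $\nu$ to replace $\ti g$ by a \emph{complete} flow attaining $\nu$ that dominates it; by the uniqueness of Theorem \ref{main_exist_thm} this is the canonical flow from $\nu$. Second, the comparison between the canonical flows from $\nu$ and from $\mu$ is obtained by choosing \emph{order-preserving} smoothings $g_h(\nu)\leq g_h(\mu)$ (the Smoothing Lemma), running the smooth maximally stretched flows of \cite{GT2} from these genuinely smooth initial metrics (where pointwise comparison is legitimate), and passing to the limit using the compactness of \cite[Lemma 4.2]{TY3}; the uniqueness theorem is then the decisive tool identifying the two limit flows with $g(t)$ and the canonical flow from $\nu$, so the ordering survives the limit. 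Your proposal uses neither the already-proved uniqueness nor Theorem \ref{DC_max_stretch_thm} at this stage, and without one of these (or a new estimate playing their role) the step $\ti g(\tau)\leq g_\delta(\tau)$ remains unproved, so the argument as written does not close.
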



This theorem gives the strongest assertion that the solution from 
Theorem \ref{main_exist_thm} is `maximally stretched' in the sense of \cite{GT2}.
Note that even in the case that $\nu=\mu$ and $\mu$ is smooth, this theorem has been open until this point, and is not contained in \cite{GT2}, again because the initial data is only assumed to be attained by $\ti g(t)$ in the weak sense.

The proof of  Theorem \ref{ordered_thm}, found in Section \ref{ordered_sect},
will invoke the existence and uniqueness of Theorem \ref{main_exist_thm}.
However, in order to prove Theorem \ref{main_exist_thm} itself 
(Section \ref{main_thm_pf_sect})
we will need to first prove a weaker `maximally stretched' assertion that we give in Theorem \ref{DC_max_stretch_thm}, 
and prove in Section \ref{max_stretch_sect},
in which we assume in particular that $\nu=\mu$.
As mentioned earlier, the special case of $M$ closed in Theorem \ref{main_exist_thm} is covered in previous work. As a service to the reader, we translate the uniqueness proof of Di Nezza and Lu \cite{DL} into the language of this paper in Appendix \ref{sec:s2}.

The theory in this paper has a number of immediate applications. 
It yields a complete analysis of expanding Ricci solitons in two dimensions, as will be explained in 
\cite{PT}. This application requires crucially both the ability to work with very rough initial data and the ability to work on noncompact manifolds.
Several of the arguments in \cite{TY3} simplify significantly; some extensions will be discussed in 
\cite{PT}. 
At a more general level, our work opens up the possibility of doing calculus on extremely rough surfaces by invoking the canonical smoothing that we are presenting here, and doing the analysis on the Ricci flow instead.

\emph{Acknowledgements:} PT was supported by EPSRC grant EP/T019824/1.
HY was supported by NSFC 11971451 and 2020YFA0713102. HY would like to thank Professor Yuxiang Li for a discussion on the result of Brezis-Merle \cite{BM}.
For the purpose of open access, the authors have applied a Creative Commons Attribution (CC BY) licence to any author accepted manuscript version arising.

\section{Construction of a maximally stretched solution from weak initial data}
\label{max_stretch_sect}

A central ingredient in the proof of Theorem \ref{main_exist_thm} is the following construction of a maximally stretched solution starting with 
a nonatomic Radon measure $\mu$ on the unit disc $D$ in the plane, or on the entire plane.

\begin{thm}[Existence of maximally stretched Ricci flow starting with $\mu$]
\label{DC_max_stretch_thm}
Let the Riemann surface $M$ be either the disc or the plane, and let $\mu$ be a 
nonatomic Radon measure on $M$. If $M$ is the disc we set $T=\infty$, while if $M$ is the plane then we set $T=\frac{1}{4\pi}\mu(M)\in [0,\infty]$.
Then there exists a smooth complete conformal Ricci flow $g_{max}(t)$ on $M$, for $t\in (0,T)$,
attaining $\mu$ as initial data in the sense that 
$$\mu_{g_{max}(t)}\weakto \mu\text{ as }t\downto 0$$
and so that if $\ti g(t)$, $t\in (0,\ti T)$, is any smooth conformal Ricci flow on $M$ 
(not necessarily complete)
that attains $\mu$ as initial data in the same sense, then 
$\ti T\leq T$ and 
$$g_{max}(t)\geq \ti g(t)\qquad\text{ for all }t\in (0,\ti T).$$
\end{thm}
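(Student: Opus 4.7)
The plan is to construct $g_{max}$ as a monotone limit of the maximally stretched instantaneously complete Ricci flows produced by \cite{GT2} from smooth initial data approximating $\mu$ from below, and then to verify completeness, weak initial-data attainment, and the maximally stretched property by combining the \cite{GT2} theory with the uniqueness result of \cite{ICRF_UNIQ} and local analysis of the logarithmic fast diffusion equation \eqref{RF_eq_u}.

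First I would choose an increasing sequence of smooth, bounded, compactly supported conformal factors $u_0^n : M \to [0,\infty)$ with $u_0^n \, dA \uparrow \mu$ weakly (possible precisely because $\mu$ is nonatomic) and with $\int u_0^n \uparrow \mu(M)$, so that the natural existence times $T_n$ ($=\infty$ on the disc, $=\frac{1}{4\pi}\int u_0^n$ on the plane) increase to $T$. For each $n$, \cite{GT2} yields a maximally stretched instantaneously complete Ricci flow $g_n(t) = u_n(t)(dx^2+dy^2)$ on $[0,T_n)$ with $u_n(0) = u_0^n$, and the \cite{GT2} maximally stretched property applied to the pair $(g_n,g_{n+1})$ gives $u_n \leq u_{n+1}$. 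A uniform local upper bound $u_n(t,x) \leq F(\mu,t,x)$ independent of $n$, to be established via a pseudolocality-style $L^\infty$ estimate for \eqref{RF_eq_u} in terms of the local mass of $\mu$ (in the spirit of \cite{TY3}), ensures that $u_{max} := \sup_n u_n$ is finite; parabolic regularity then promotes the pointwise monotone convergence $u_n \uparrow u_{max}$ to $C^\infty_{loc}$ convergence, so $u_{max}$ is smooth and solves \eqref{RF_eq_u} on $(0,T) \times M$.

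Completeness of $g_{max}$ follows from the distance comparison $d_{g_n(t)} \leq d_{g_{max}(t)}$ (a consequence of $g_n \leq g_{max}$) together with the completeness of each $g_n(t)$. For weak attainment of $\mu$, the identity
\[
\int_M \psi\, d\mu_{g_n(t)} - \int_M \psi\, d\mu_n \;=\; \int_0^t \int_M (\Delta\psi)\,(\log u_n)\, dA\, ds \qquad (\psi \in C_c^\infty(M))
\]
combined with uniform local $L^1$ bounds on $\log u_n$ (positive part from the $L^\infty$ estimate, negative part from an entropy-type estimate as in \cite{TY3}) yields $\mu_{g_n(t)} \weakto \mu_n$ as $t \downto 0$ uniformly in $n$; together with $\mu_n \uparrow \mu$ and the monotone convergence of $u_n(t)$ to $u_{max}(t)$ at each fixed $t > 0$, this gives $\mu_{g_{max}(t)} \weakto \mu$. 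In the plane case, the bound $\tilde T \leq T$ will follow a posteriori from the inequality $\tilde g \leq g_{max}$, since the latter forces $\mu_{\tilde g(t)}(M) \leq \mu(M) - 4\pi t$, which cannot remain positive past $t = T$.

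The main obstacle is the maximally stretched property itself: given a smooth conformal Ricci flow $\tilde g$ on $(0,\tilde T)$ attaining $\mu$ weakly, show $\tilde g \leq g_{max}$. Direct comparison with $g_n$ via \cite{GT2} goes the wrong way, because $\tilde g(0+) = \mu \geq u_0^n\, dA = g_n(0)$. The plan is to work at positive times: once a pointwise inequality $\tilde u(\delta,\cdot) \leq u_{max}(\delta,\cdot)$ is proved at some $\delta \in (0,\tilde T)$, the uniqueness theorem \cite{ICRF_UNIQ} identifies $g_{max}$ on $[\delta,T)$ with the \cite{GT2} maximally stretched flow launched from the smooth complete initial datum $g_{max}(\delta)$, and the \cite{GT2} maximally stretched property then propagates the inequality to $\tilde g(t) \leq g_{max}(t)$ for all $t \in [\delta, \tilde T)$; sending $\delta \downto 0$ yields $\tilde g \leq g_{max}$ on $(0,\tilde T)$. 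The hard kernel is therefore the pointwise comparison at time $\delta$, since weak convergence alone is insufficient. I would aim to produce this by a universal pointwise upper bound on any smooth Ricci flow attaining a Radon measure $\leq \mu$ weakly, derived by combining the pseudolocality $L^\infty$ estimate above with a Brezis--Merle-style integral inequality (cf.\ \cite{BM}) that converts local integral control of $\log u$ on a ball into pointwise control of $u$ on a slightly smaller ball.
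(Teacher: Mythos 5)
Your construction of $g_{max}$ as an increasing limit of \cite{GT2}-flows from smooth data approximating $\mu$ from below is plausible (it essentially re-derives the existence statement of \cite{TY3}, which the paper simply quotes), and your propagation step is sound: if one knew the pointwise inequality $\ti u(\de,\cdot)\leq u_{max}(\de,\cdot)$ at each small $\de>0$, then \cite{ICRF_UNIQ} identifies $g_{max}$ on $[\de,T)$ with the maximally stretched flow from $g_{max}(\de)$ and \cite{GT2} gives $\ti g\leq g_{max}$ on $[\de,\ti T)$. The genuine gap is the step you yourself flag as the hard kernel: the pointwise comparison at a positive time between two flows that share only weakly attained initial data. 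The tool you propose cannot deliver it. Any ``universal pointwise upper bound'' valid for every smooth flow attaining a measure $\leq\mu$ weakly must in particular be satisfied by $u_{max}$ itself, so to conclude $\ti u(\de)\leq u_{max}(\de)$ the universal bound would have to coincide exactly with $u_{max}(\de)$ --- i.e.\ it is the theorem itself at time $\de$. Pseudolocality-type $L^\infty$ estimates and Brezis--Merle integral estimates \cite{BM} inevitably carry non-sharp constants and error terms (local mass at a larger scale, additive terms like $\eta t$), so at best they yield something like $\ti u(\de)\leq C\,u_{max}(\de)+o(1)$ with $C>1$; propagating such a bound forward and sending $\de\downto 0$ compares $\ti g$ with the maximal flow from $C\mu$, not from $\mu$, and no Gronwall-type mechanism for \eqref{RF_eq_u} removes the constant. (In the paper, Brezis--Merle is used only in the compact $S^2$ appendix, to bound $e^{-\vph}$ in $L^p$ for the \emph{potential}, never to compare conformal factors pointwise.)

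This is exactly the difficulty the paper's proof is organized around, and it is resolved by passing to a zeroth-order quantity rather than estimating $u$ itself. One introduces the potential $\vph$ with $\lap\vph=u$ and $\partial_t\vph=\log\lap\vph$ (Lemma \ref{potential_construct_lem}); weak attainment of $\mu$ by the flow upgrades to pointwise and $L^1_{loc}$ attainment of an initial potential $\vph_0$ with $\lap\vph_0=\mu$, together with the space-time upper semicontinuity of Lemma \ref{upper_semi_lem}, which is what substitutes for an initial-time comparison. The sharp domination is then proved by a maximum principle not on $M$ but on localized potentials $\vph_R$ on balls $B_R$, built from complete flows on $B_R$, where the hyperbolic lower bound $u_R\geq 2t h_R$ makes $\vph_{R,\de}-\psi$ blow up like $\frac{\de}{2}\log h_R$ near $\partial B_R$ and serves as a barrier (Claim 3 of Lemma \ref{core_lem}); letting $R\upto R_*$ recovers $\vph$. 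Finally, maximality of $g_{max}$ over an arbitrary $\ti g$ is obtained by applying this potential comparison twice --- to $g_{max}$ and to the maximalized flow built from $\ti g$ --- noting their initial potentials differ by a harmonic function, which forces the two maximal flows to coincide. Without some device of this kind that converts weak measure convergence into a pointwise, maximum-principle-compatible statement, your outline stalls precisely at the comparison $\ti u(\de)\leq u_{max}(\de)$.
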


In order to construct the maximally stretched solution of Theorem \ref{DC_max_stretch_thm} we need to analyse a potential corresponding to the flow that would coincide with the standard potential of K\"ahler Ricci flow if we were working on a closed surface. Understanding the right way of defining and localising the potential in the noncompact case is a key part of our task in this paper.




\begin{lem}[Construction of a potential]
\label{potential_construct_lem}
Suppose the Riemann surface $M$ is either the disc or the plane.
Suppose we have a smooth complete conformal Ricci flow $g(t)$ on $M$, $t\in (0,T)$, 
for $T\in (0,\infty]$, with conformal factor $u:M\times (0,T)\to(0,\infty)$,  taking a 
Radon measure $\mu$ on $M$ as initial data.
Then there exists a smooth potential function $\vph:M\times (0,T)\to \R$ satisfying
\beq
\label{vph_elliptic}
\lap \vph(t)=u(t)>0
\eeq
for each $t\in (0,T)$ and 
\beq
\label{potential_evol_eq_first}
\pl{\vph}{t}=\log\lap\vph
\eeq
throughout $M\times (0,T)$.
Moreover, there exists a function $\vph_0:M\to\R\union\{-\infty\}$ representing an element of $L^1_{loc}(M)$ such that 
$\vph(t)\to\vph_0$ both pointwise and in $L^1_{loc}(M)$ as $t\downto 0$.
Moreover, 
\begin{equation}
\label{vph0_elliptic}
\lap \vph_0=\mu
\end{equation}
weakly, and 
\beq
\label{vph_0_char_new}
\vph_0(x)=\lim_{r\downto 0} \fint_{B_r(x)} \varphi_0 dx,
\eeq
for all $x\in M$. 
%
\end{lem}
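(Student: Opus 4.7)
The plan is to construct $\vph$ on $M\times(0,T)$ by integrating the equation $\pt\vph=\log u$ in time from a fixed reference slice, and then to pass to the limit $t\downto 0$ using the weak convergence of $\mu_{g(s)}$ to $\mu$. Fix any $t_1\in(0,T)$ and solve the Poisson equation $\lap f = u(t_1)$ for a smooth $f\colon M\to\R$. On the disc this is standard; on the plane, either $T<\infty$ and $u(t_1)$ is integrable (so the Newtonian potential works), or else a diagonal compact-exhaustion argument produces a global $f$. Then set
\[
\vph(x,t) \;:=\; f(x) + \int_{t_1}^{t} \log u(x,s)\,ds,
\]
which is smooth on $M\times(0,T)$. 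By construction $\pt\vph=\log u$, and differentiating under the integral one has $\lap\vph(\cdot,t) = u(t_1) + \int_{t_1}^{t}\partial_s u\,ds = u(t)$, so both \eqref{vph_elliptic} and \eqref{potential_evol_eq_first} hold automatically.

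The next task is to show that $I(x):=\int_0^{t_1}\log u(x,s)\,ds$ is well defined in $L^1_{loc}(M)$ and pointwise in $\R\cup\{+\infty\}$ at every $x$. The positive part is easy: $(\log u)_+\le u$ pointwise, and weak convergence $\mu_{g(s)}\weakto\mu$ together with smoothness for $s>0$ yields $\sup_{s\in(0,t_1]}\int_K u(\cdot,s)\,dx\le C(K)<\infty$ on each compact $K\subset M$, so $\int_0^{t_1}\int_K(\log u)_+\,dx\,ds\le t_1 C(K)$. The delicate step --- and the main obstacle in the argument --- is the negative part, which measures how badly $u$ can degenerate as $s\downto 0$. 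The plan here is to invoke the curvature bound $|K_g(\cdot,s)|\le C/s$ known for instantaneously complete two-dimensional Ricci flows (see \cite{JEMS,GT2} for the relevant surface theory). Since $\pt\log u = \lap\log u / u = -2K_g$, this gives the pointwise estimate $|\pt\log u|\le C/s$, and integrating in $s$ yields $|\log u(x,s) - \log u(x,t_1)|\le C\log(t_1/s)$ for $s\in(0,t_1]$. On a compact $K$ the quantity $|\log u(\cdot,t_1)|$ is bounded, so $(\log u)_-(x,s)\le C_K + C\log(t_1/s)$ uniformly on $K$, and since $\int_0^{t_1}\log(t_1/s)\,ds = t_1$, both $(\log u)_+$ and $(\log u)_-$ lie in $L^1\bigl(K\times(0,t_1)\bigr)$.

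Granted these bounds, define $\vph_0 := f - I\in L^1_{loc}(M;\R\cup\{-\infty\})$. The $L^1_{loc}$ convergence $\vph(t)\to\vph_0$ as $t\downto 0$ is immediate, and pointwise convergence at every $x\in M$ follows from monotone convergence applied separately to the positive and negative parts of $\log u(x,\cdot)$. To verify \eqref{vph0_elliptic} weakly, test against $\eta\in C_c^\infty(M)$ and use Fubini with two integrations by parts:
\[
\int_M\vph_0\,\lap\eta\,dx \;=\; \int_M u(t_1)\eta\,dx - \int_0^{t_1}\!\!\int_M\eta\,\partial_s u\,dx\,ds \;=\; \lim_{s\downto 0}\int_M\eta\,u(s)\,dx \;=\; \int_M\eta\,d\mu,
\]
the last equality being the assumed weak convergence $\mu_{g(s)}\weakto\mu$. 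Hence $\lap\vph_0=\mu\ge 0$ in the distributional sense, so $\vph_0$ is represented by a subharmonic function; the formula \eqref{vph_0_char_new} then follows by replacing $\vph_0$ with its canonical upper-semicontinuous representative and invoking the classical potential-theoretic fact that the precise representative of a subharmonic function coincides with the pointwise limit of its ball averages. This replacement preserves both the $L^1_{loc}$ convergence and the pointwise convergence at every $x\in M$ already established.
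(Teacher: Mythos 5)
There is a genuine gap at the final step, the characterisation \eqref{vph_0_char_new}. Your $\vph_0$ is, by construction, the \emph{everywhere-defined} pointwise limit of $\vph(t)$, and the lemma asserts \eqref{vph_0_char_new} for that same function. You instead prove only that $\lap\vph_0=\mu\geq 0$ weakly and then ``replace $\vph_0$ with its canonical upper-semicontinuous representative'', asserting that the replacement preserves the pointwise convergence at every $x$. That assertion is precisely what has to be proved: an a.e.\ representative of a subharmonic function may differ from the canonical one at individual points (e.g.\ lower the value at a single point; the sub-mean-value inequality and the weak equation survive, but \eqref{vph_0_char_new} fails there), so from what you have established the limit could a priori satisfy only $\vph_0(x)\leq\liminf_{r\downto 0}\fint_{B_r(x)}\vph_0\,dx$, with strict inequality possible. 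The missing ingredient is upper semi-continuity of the pointwise limit itself. The paper gets this in Lemma \ref{potential_sublem}: splitting $\int_\tau^t\log u$ into the parts over $\{u\leq1\}$ and $\{u>1\}$ and using $u\geq\ep t$ (valid locally by completeness and Chen's estimate, Remark \ref{uovert_mono}), the function $t\mapsto\vph(t)+t(1-\log(\ep t))$ is monotone, so $\vph_0$ is a decreasing limit of continuous functions as $t\downto 0$, hence u.s.c.; combining u.s.c.\ with the sub-mean-value inequality passed to the limit yields \eqref{vph_0_char_new} for the limit itself. You need this (or an equivalent) argument; without it the conclusion is only proved for a possibly different representative.

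A second problem is your appeal to a two-sided bound $|K_{g(s)}|\leq C/s$ for instantaneously complete surface flows: no such upper curvature bound is available in this generality (and it fails, e.g., when $\mu$ is close to concentrating, where curvature at small times is far larger than $C/s$); only B.-L.\ Chen's lower bound $K_{g(s)}\geq-\tfrac1{2s}$ holds for complete flows. Fortunately the half you actually need for the negative part of $\log u$ is exactly this lower bound: $\pl{}{t}\log u=-2K\leq 1/s$ gives $u(x,s)\geq (s/t_1)\,u(x,t_1)$, i.e.\ the monotonicity of $u/t$ in Remark \ref{uovert_mono}, which yields $(\log u)_-\leq C_K+|\log s|$ locally; so this step is repairable, but as written it rests on a false citation. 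Finally, on the plane with $u(t_1)\notin L^1$ a bare ``diagonal compact-exhaustion'' does not produce a solution of $\lap f=u(t_1)$; one needs the Runge-type correction by harmonic polynomials as in the paper's Lemma \ref{lem:poisson}. The rest of your construction (integrating $\log u$ from a reference time, the $L^1_{loc}$ limit, and the verification of \eqref{vph0_elliptic}) is essentially the paper's argument and is sound.
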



By virtue of \eqref{vph_elliptic} and \eqref{potential_evol_eq_first}, the potential $\vph$ and its initial values $\vph_0$ are only determined up to the addition of a time-independent harmonic function on $M$.

\cmt{Slightly subtle point that we are now omitting to explain explicitly: Although $\vph_0$ is defined pointwise in terms of the specific potential flow $\vph(t)$, if we only know it a.e. (in particular as an element of $L^2$ arising as a limit of a different potential flow) then \eqref{vph_0_char_new} allows us to reconstruct it at every point.}

\cmt{currently we don't say "upper semi-continuous" at this stage, to make things as streamlined as possible.}

\cmt{The intrinsic characterisation \eqref{vph_0_char_new}
of $\vph_0$ will make it easier to compare two solutions with the same initial $\vph_0$ in a pointwise sense.}

\cmt{We are purposefully not mentioning that the integrals in 
\eqref{vph_0_char_new} are monotonic in $r$, to avoid distraction.}

\cmt{note that the lemma below does not say anything about $\mu$, and in particular, does not say that $\lap\vph_0=\mu$}



To avoid repetition in Appendix \ref{sec:s2}, we extract a portion of the proof of Lemma \ref{potential_construct_lem} in the following.


\begin{lem}
\label{potential_sublem}
For $R>0$, $T\in (0,\infty]$, $\tau\in (0,T)$, let $u\in C^\infty(B_R\times (0,T))$ satisfy
    \[
    u(x,t)\geq \varepsilon t \qquad \text{on} \quad B_R\times (0,\tau]  
    \]
for some $\varepsilon>0$, and
    \begin{equation}
    \label{uL1_hyp}
    \int_0^\tau \int_{B_R} \abs{u} dx dt <C
    \end{equation}
for some $C>0$.
    If 
    \begin{equation}
    \label{vphbarvph_def}
    \varphi(x,t)= \varphi_\tau(x) + \int_\tau^t \log u(x,s) ds
    \end{equation}
    for some ${\varphi_\tau}\in C^\infty(\overline{B_R})$,
    then the pointwise limit $\varphi_0:=\lim_{t\downto 0} \varphi(t)$ taking values in  $\Real\cup \set{-\infty}$ exists and satisfies
\begin{enumerate}[(1)]
    \item\label{part1}
    $\varphi_0$ is upper semi-continuous on $B_R$.
    \item \label{part2}
    $\varphi_0$ is in $L^1(B_R)$ and $\varphi(t)\to \varphi_0$ in $L^1(B_R)$ as $t\downto 0$. 
    \item\label{part3}
    If we further assume that  $\Delta \varphi(t)\geq 0$
    for every $t\in (0,T)$, then 
    \begin{equation}
    \label{vph0_av_lim}
    \varphi_0(x) = \lim_{r\downto 0} \fint_{B_r(x)} \varphi_0 dx,   
    \end{equation}
    for all $x\in B_R$.
\end{enumerate}
\end{lem}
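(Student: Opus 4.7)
\textbf{Plan for Lemma \ref{potential_sublem}.}
The plan is to decompose $\log u$ by sign and exploit the two hypotheses in complementary roles: the lower bound $u \geq \varepsilon s$ controls $\log_- u$ uniformly in $x$, while \eqref{uL1_hyp} together with the elementary inequality $\log_+ u \leq u$ controls $\log_+ u$ in $L^1(B_R \times (0,\tau))$. Concretely, $\log_- u(x,s) \leq \max(0, -\log(\varepsilon s)) =: h(s)$, where $h \in L^1(0,\tau)$ is independent of $x$, so $\int_0^\tau \log_- u(x,s)\,ds$ is finite for every $x \in B_R$. On the other hand $\int_{B_R}\int_0^\tau \log_+ u\,ds\,dx \leq \int_{B_R}\int_0^\tau u\,ds\,dx < C$, so by Fubini $\int_0^\tau \log_+ u(x,s)\,ds < \infty$ for almost every $x$. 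Monotone convergence applied separately to each sign of $\int_t^\tau \log u(x,s)\,ds$ as $t \downto 0$ then yields, for every $x \in B_R$, pointwise convergence of $\varphi(x,t)$ to
\[
\varphi_0(x) := \varphi_\tau(x) - \int_0^\tau \log u(x,s)\,ds \in \R \cup \{-\infty\},
\]
with the value $-\infty$ occurring precisely on the null set where $\int_0^\tau \log_+ u = +\infty$. The same bounds give $\varphi_0 \in L^1(B_R)$ and
\[
\|\varphi(t) - \varphi_0\|_{L^1(B_R)} \leq \int_{B_R}\int_0^t |\log u|\,ds\,dx \to 0
\]
by absolute continuity of the integral, proving part \eqref{part2}.

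For part \eqref{part1} my key one-sided bound would be
\[
\varphi_0(x) = \varphi(x,t) - \int_0^t \log u(x,s)\,ds \leq \varphi(x,t) - \int_0^t \log(\varepsilon s)\,ds,
\]
valid for every $x \in B_R$ and every $t \in (0,\tau]$. For fixed $t > 0$ the right-hand side is a smooth function of $x$, so taking $\limsup_{y \to x}$ and then sending $t \downto 0$ (using pointwise convergence of $\varphi(x,t)$ to $\varphi_0(x)$ and $\int_0^t \log(\varepsilon s)\,ds \to 0$) yields $\limsup_{y\to x} \varphi_0(y) \leq \varphi_0(x)$, where both sides may be $-\infty$. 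For part \eqref{part3}, the additional hypothesis makes $\varphi(\cdot,t)$ sub-harmonic on $B_R$, so the sub-mean-value inequality $\varphi(x,t) \leq \fint_{B_r(x)}\varphi(y,t)\,dy$ holds on every ball $B_r(x) \subset B_R$. Pointwise convergence on the left and $L^1$ convergence on the right pass this inequality to $\varphi_0(x) \leq \fint_{B_r(x)} \varphi_0\,dy$. Combined with the upper semi-continuity from \eqref{part1}, which gives $\fint_{B_r(x)} \varphi_0 \leq \sup_{B_r(x)} \varphi_0 \to \varphi_0(x)$ as $r \downto 0$, this establishes \eqref{vph0_av_lim}.

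The main delicate point, which I would handle explicitly throughout, is the uniform treatment of the case $\varphi_0(x) = -\infty$: both the upper-semi-continuity inequality and the passage to the limit in the sub-mean-value inequality need to be phrased so as to remain sensible and informative on the exceptional null set where $\int_0^\tau \log_+ u$ diverges. Apart from this the proof is a fairly clean combination of Fubini, monotone convergence, and the standard potential-theoretic fact that spherical means of an upper semi-continuous sub-mean-value function decrease back to its pointwise values.
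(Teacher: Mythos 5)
Your proposal is correct and follows essentially the same route as the paper: the sign decomposition of $\log u$ matches the paper's split into the regions $\{u\leq 1\}$ and $\{u>1\}$, your one-sided bound $\varphi_0\leq\varphi(t)+t(1-\log(\ep t))$ is exactly the monotone comparison the paper uses to get upper semi-continuity, and Part (3) is argued identically via the sub-mean-value inequality plus upper semi-continuity. No gaps; the handling of the $-\infty$ set and the $L^1$ convergence via $\log_+u\leq u$ and the integrable majorant are sound.
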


\begin{proof}[Proof of Lemma \ref{potential_sublem}]
To control $\vph$ in the limit $t\downto 0$, we split \eqref{vphbarvph_def} as 
$$\vph(x,t)=\vph_\tau(x)+\check\vph(x,t)+\hat\vph(x,t),$$
where 
$$\check\vph(x,t):=\int_\tau^t \chi_{\{u\leq 1\}}\log u(x,s)\,ds
\quad\text{ and }\quad
\hat\vph(x,t):=\int_\tau^t \chi_{\{u> 1\}}\log u(x,s)\,ds,
$$
are decreasing and increasing in $t$, respectively.
The function $\check\vph(t)$ converges uniformly on $B_R$ as $t\downto 0$ because we are assuming that $u\geq \ep t$
for $t\in (0,\tau]$,
so the function $\ti\vph:B_R\times (0,T)\to\R$ defined by
$$\ti\vph(t):= \check\vph(t)+ t(1-\log(\ep t))=
\tau(1-\log(\ep \tau))+
\int_\tau^t \left(\chi_{\{u\leq 1\}}\log u(x,s)-\log (\ep s)\right)ds
$$
\cmt{Note that below we are adopting the convention that $\vph_0\equiv -\infty$ would be a valid upper semi-continuous function. That is impossible here, but we say $\vph_0$ is upper semi-continuous before we establish that.}
is increasing for 
$t\leq \ep^{-1}\wedge\tau$ 
even though $\check \vph(t)$ is decreasing.
These considerations also tell us that
$$t\mapsto \vph(t)+t(1-\log(\ep t))=\vph_\tau(x)+\ti\vph(x,t)+\hat\vph(x,t)$$
is a pointwise increasing family of continuous functions for small $t$.
As any decreasing limit of continuous functions is upper semi-continuous, we obtain an upper semi-continuous function
$\varphi_0:=\lim_{t\downto 0} \varphi(t)$ as required for Part (\ref{part1}).
Meanwhile, 
$$\pl{\hat\vph}{t}=\chi_{\{u> 1\}}\log u(x,t)\leq u(x,t),$$
and so \eqref{uL1_hyp} tells us that $\hat\vph(t)$, and hence $\vph(t)$, converges in $L^1(B_R)$ as $t\downto 0$, as required for Part (\ref{part2}).

In the final part we are also told that $\vph(t)$ is subharmonic, and in particular 
$$\vph(x,t)\leq \fint_{B_r(x)} \vph(t) dx,$$
for all $x\in B_R$ and small enough $r>0$ so that $B_r(x)\subset\subset B_R$.
We can pass this inequality to the limit $t\downto 0$ to give
$$\vph_0(x)\leq 
\fint_{B_r(x)} \vph_0 dx,$$
and hence 
$$\vph_0(x)\leq \liminf_{r\downto 0}\fint_{B_r(x)} \vph_0 dx.$$
On the other hand, we have established that $\vph_0$ is upper semi-continuous, and so 
$$\vph_0(x)\geq \limsup_{r\downto 0}\fint_{B_r(x)} \vph_0 dx.$$
Combining these two inequalities gives Part (\ref{part3}).
\end{proof}

\begin{rmk}
Classical potential theory gives other ways of phrasing the proof, some of which give additional information that we do not require. For example, the integrals in \eqref{vph0_av_lim} are increasing in $r$.
See, for example, \cite[Section 3]{class_harm_anal}.
\end{rmk}

The following remark will be used in the proof of 
Lemma \ref{potential_construct_lem} and later also.
\begin{rmk}
\label{uovert_mono}
Whenever we have a smooth complete Ricci flow $g(t)$ on a surface $M$, $t\in (0,T)$, and we write it $g(t)=u(t)g_0$ for conformal factor $u$ and fixed background metric $g_0$, then for each $x\in M$, 
$$t\mapsto \frac{u(x,t)}{t}\quad\text{ is monotonically decreasing in }t.$$
The reason is that the Ricci flow equation \eqref{RF_eq}
implies that 
$$\pl{u}{t}=-2K_g u,$$
while because of the completeness, 
B.-L. Chen's scalar curvature estimate \cite{strong_uniqueness} tells us that the Gauss curvature satisfies the lower bound $K_{g(t)}\geq -\frac{1}{2t}$ for all $t\in (0,T)$.
Then 
$$\pl{}{t}\left(\frac{u}{t}\right)=-\frac{2u}{t}\left(K+\frac{1}{2t}\right)\leq 0.$$
\end{rmk}

\begin{proof}[Proof of Lemma \ref{potential_construct_lem}]
Define $\tau:=\frac{T}{2}\wedge 1$ and fix a smooth solution $\vph_\tau$ to the 
equation 
$$\lap\vph_\tau=u(\tau)$$
on $M$. Such a solution can be found using Lemma \ref{lem:poisson} and is  unique up to the addition of a harmonic function.
We define the required smooth potential $\vph:M\times (0,T)\to \R$ by
\beq
\label{ti_vph_def}
\vph(x,t)=\vph_\tau(x)+\int_\tau^t \log u(x,s)\,ds.
\eeq
By applying $\lap$ and using the Ricci flow equation \eqref{RF_eq_u} we obtain at each point $x\in M$ that 
\begin{equation}
\label{vph_t_eq}
\begin{aligned}
\lap \vph(t) &=\lap \vph_\tau+\int_\tau^t \lap\log u(s)\,ds\\
&=u(\tau)+\int_\tau^t \pl{u}{t}(s)\,ds\\
&=u(t),
\end{aligned}
\end{equation}
and so $\vph$ satisfies \eqref{vph_elliptic}.
Moreover, combining this with the time derivative of \eqref{ti_vph_def} tells
us that $\vph$ satisfies \eqref{potential_evol_eq_first}.

Pick $R>0$ such that $B_R:=B_R(0)\subset\subset M$.
Choose $\ep'>0$ so that $u(\tau)\geq \ep'$ on $B_R$.
By Remark \ref{uovert_mono}, for $\ep:=\ep'/\tau>0$, we then have 
$$u(t)\geq \ep t$$
throughout $B_R$, for all $t\in (0,\tau]$. We have established the first hypothesis of Lemma \ref{potential_sublem}.

Next, fix any $\ti R>R$ still with $B_{\ti R}\subset \subset M$.
Then for any $0<s\leq t\leq \tau$, we can appeal to 
\cite[Lemma 3.2]{TY3} to find that
\beq
\label{controlled_mass_gain}
\Vol_{g(t)}(B_R)\leq (t-s)\eta +\Vol_{g(s)}(B_{\ti R})
\eeq
for some $\eta>0$ depending only on $\ti R/R$.
In particular, by taking a limsup as $s\downto 0$ we obtain
\beq
\label{uL1}
\int_{B_R}u(t)dx=
\Vol_{g(t)}(B_R)\leq \eta t+\mu(\overline{B_{\ti R}})\leq L,
\eeq
where $L<\infty$ is independent of $t$ (keeping in mind that $t\leq \tau\leq 1$).
This implies hypothesis \eqref{uL1_hyp} of Lemma \ref{potential_sublem}.

Applying Lemma \ref{potential_sublem} then gives $\vph_0$ and all its desired properties except for
the equation \eqref{vph0_elliptic}.
By \eqref{vph_elliptic}, for every $\xi\in C_c^2(M)$ we have
$$\int_M \vph(t)\lap \xi dx =\int_M u(t)\xi dx,$$
and by taking a limit $t\downto 0$ we obtain
$$\int_M \vph_0\lap \xi dx=\int_M \xi d\mu,$$
i.e. $\vph_0$ is a weak solution of
$$\lap\vph_0=\mu$$
as required.
\end{proof}

In practice we must contemplate other solutions $\psi$ to the potential function evolution equation \eqref{potential_evol_eq_first} with the same initial data $\vph_0$ that was induced in Lemma \ref{potential_construct_lem}.
It will be important to verify that such $\psi$ cannot jump up unreasonably from $\vph_0$ as $t$ lifts off from zero, and that is the content of the following lemma.

\cmt{New version gets rid of $\mu$ and changes from mollification.
We got rid of the `decreasing limit' statement. We might want to reword to allow a disc of any radius, depending on application in appendix (earlier applications are fine like this).}



\begin{lem}[Space-time upper semi-continuity]
\label{upper_semi_lem}
Suppose the Riemann surface $M$ is either the disc or the plane.
Suppose, for $T>0$, that we have a smooth function $\psi:M\times (0,T)\to\R$ such that 
$$\lap\psi(t)\geq 0$$
throughout, and so that 
\beq
\label{another_L1_hyp}
\psi(t)\to \vph_0\quad\text{ in }L^1_{loc}(M)
\eeq
as $t\downto 0$, where $\vph_0\in L^1_{loc}(M)$ 
admits a representative for which
$$\vph_0(x)=\lim_{r\downto 0} \fint_{B_r(x)} \varphi_0 dx$$
for all $x\in M$.
Then for all $x\in M$,  all $x_n\to x$ and all $t_n>0$ with $t_n\downto 0$, we have 
$$\vph_0(x)\geq \limsup_{n\to\infty}\psi(x_n,t_n).$$
\end{lem}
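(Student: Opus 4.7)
The plan is to exploit that each time-slice $\psi(\cdot,t)$ is subharmonic, apply the mean-value inequality over a small ball, then pass to the limit first in $n$ and then in the ball radius, using the $L^1_{loc}$ convergence hypothesis to handle the first limit and the pointwise averaging characterisation of $\vph_0$ to handle the second.

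Concretely, fix $x \in M$, a sequence $x_n\to x$, and $t_n\downto 0$. Choose $r_0>0$ so small that $\overline{B_{2r_0}(x)}\subset M$, and for any $r\in(0,r_0]$ note that $B_r(x_n)\subset B_{2r_0}(x)$ once $n$ is large. Since $\lap\psi(\cdot,t_n)\geq 0$ and $\psi$ is smooth, the mean value inequality for subharmonic functions gives
\beq\label{mvineq}
\psi(x_n,t_n) \leq \fint_{B_r(x_n)} \psi(y,t_n)\,dy.
\eeq

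Next I would pass $n\to\infty$ with $r$ held fixed. Split
\[
\int_{B_r(x_n)} \psi(y,t_n)\,dy \;=\; \int_{B_r(x)} \psi(y,t_n)\,dy \;+\; \int_{B_r(x_n)\setminus B_r(x)}\psi(y,t_n)\,dy \;-\; \int_{B_r(x)\setminus B_r(x_n)}\psi(y,t_n)\,dy.
\]
The first term converges to $\int_{B_r(x)}\vph_0\,dy$ by the $L^1_{loc}$ hypothesis \eqref{another_L1_hyp}. The two error terms are controlled by
\[
\int_{B_{2r_0}(x)} \bigl|\psi(y,t_n)-\vph_0(y)\bigr|\,dy \;+\; \int_{B_r(x_n)\triangle B_r(x)} |\vph_0(y)|\,dy,
\]
where the first integral vanishes as $n\to\infty$ by $L^1_{loc}$ convergence, and the second vanishes because $|B_r(x_n)\triangle B_r(x)|\to 0$ and $\vph_0\in L^1(B_{2r_0}(x))$, so absolute continuity of the Lebesgue integral applies. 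Dividing by $|B_r|$, we conclude
\beq\label{avglim}
\lim_{n\to\infty}\fint_{B_r(x_n)} \psi(y,t_n)\,dy \;=\; \fint_{B_r(x)}\vph_0(y)\,dy.
\eeq

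Taking $\limsup_{n\to\infty}$ of \eqref{mvineq} and invoking \eqref{avglim} yields
\[
\limsup_{n\to\infty}\psi(x_n,t_n) \;\leq\; \fint_{B_r(x)}\vph_0(y)\,dy
\]
for every $r\in(0,r_0]$. Finally, sending $r\downto 0$ and using the assumed identity $\vph_0(x)=\lim_{r\downto 0}\fint_{B_r(x)}\vph_0\,dy$ delivers the conclusion $\vph_0(x)\geq \limsup_{n\to\infty}\psi(x_n,t_n)$.

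The only genuinely delicate point is the centre-shift in \eqref{avglim}: $L^1_{loc}$ convergence alone gives us only integrals over \emph{fixed} sets, so I explicitly separate the contribution of the symmetric difference $B_r(x_n)\triangle B_r(x)$ and dispatch it by absolute continuity of $\vph_0$. Everything else is a straightforward combination of the subharmonic mean value inequality with the pointwise averaging property of $\vph_0$.
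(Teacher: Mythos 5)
Your proof is correct and follows essentially the same route as the paper's: the subharmonic mean value inequality on $B_r(x_n)$, passing $n\to\infty$ via the $L^1_{loc}$ convergence, and then sending $r\downto 0$ using the averaging characterisation of $\vph_0$. The only difference is that you spell out the centre-shift from $B_r(x_n)$ to $B_r(x)$ via the symmetric difference and absolute continuity, a detail the paper leaves implicit in its appeal to $L^1_{loc}$ convergence.
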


\begin{proof}
By subharmonicity of $\psi(t_n)$, the mean value inequality gives
$$\psi(x_n,t_n)\leq \fint_{B_r(x_n)}\psi(t_n)dx$$
for sufficiently small $r>0$ and all $n$.
Therefore
$$\limsup_{n\to\infty}\psi(x_n,t_n)\leq \lim_{n\to\infty}\fint_{B_r(x_n)}\psi(t_n)dx
=\fint_{B_r(x)}\vph_0 dx$$
where the final equality here follows from the $L^1_{loc}(M)$ convergence hypothesis
\eqref{another_L1_hyp}.
We can then take a limit $r\downto 0$ to conclude.
\end{proof}



We will prove Theorem \ref{DC_max_stretch_thm} using the following lemma. Note that in this lemma, the flow $g_{max}(t)$ depends \emph{a priori} on $g(t)$, although ultimately this dependency will be lifted.


\begin{lem}[Maximal potential function construction]
\label{core_lem}
Let the Riemann surface $M$ be either the disc or the plane, and let $\mu$ be a 
nonatomic Radon measure on $M$.
If $M$ is the disc we set $T=\infty$, while if $M$ is the plane then we set $T=\frac{1}{4\pi}\mu(M)\in [0,\infty]$.
Suppose $g(t)$ is a smooth  conformal Ricci flow  on $M$, for $t\in (0,\ti T)$,
$\ti T>0$,
that attains $\mu$ as initial data in the sense that $\mu_{g(t)}\weakto \mu$
as $t\downto 0$.
Then $\ti T\leq T$ and there exists a smooth complete conformal Ricci flow $g_{max}(t)$ on $M$, for $t\in (0,T)$ that also attains $\mu$ as initial data, and which satisfies
$$g_{max}(t)\geq g(t)\quad\text{ for all }t\in (0,\ti T).$$ 
Let $\vph\in C^\infty(M\times (0,T))$ be a potential coming from Lemma \ref{potential_construct_lem}, together with the corresponding $\vph_0$, corresponding to $g_{max}(t)$. 
Then if $\psi\in C^\infty(M\times (0,T))$ is any other potential, i.e. 
\beq
\label{potential_evol_eq}
\lap\psi(t)\geq 0\qquad\text{ and }\qquad 
\pl{\psi}{t}=\log\lap\psi,
\eeq
also with $\psi(t)\to\vph_0$ in $L^1_{loc}(M)$ at $t\downto 0$, 
then 
$$\psi(t)\leq \vph(t)$$
throughout $M\times (0,T)$.
\end{lem}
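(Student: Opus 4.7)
The plan is to reduce the pointwise comparison $\psi\leq\vph$ to the Laplacian ordering $\lap\psi\leq\lap\vph$ of the two associated Ricci flow conformal factors, then exploit monotonicity of $\vph-\psi$ in $t$ together with initial-data control at each point.

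First, I would observe that $v:=\lap\psi$ is smooth and strictly positive on $M\times(0,T)$ (positivity is forced for $\pt\psi=\log\lap\psi$ to make sense), and taking $\lap$ of this identity gives $\pt v=\lap\log v$, i.e. the equation \eqref{RF_eq_u}. Hence $v$ is the conformal factor of a smooth (but not necessarily complete) conformal Ricci flow on $M\times(0,T)$. The $L^1_{loc}$ convergence $\psi(t)\to\vph_0$, combined with the weak identity $\lap\vph_0=\mu$ from Lemma \ref{potential_construct_lem}, implies $v(t)\weakto\mu$ as $t\downto 0$, so this flow attains $\mu$ as initial data in the weak sense appearing in the hypothesis.

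Second, I would feed this $\psi$-induced Ricci flow into the construction of the first part of the present lemma (in place of $g$). Since the maximally stretched flow produced there is canonical in $\mu$—independent of which attaining flow is supplied as input—the resulting flow coincides with the $g_{max}$ already in hand, giving $\lap\vph=u_{max}\geq v=\lap\psi$ throughout $M\times(0,T)$. Monotonicity of $\log$ then yields
\[
\pt(\vph-\psi)=\log\lap\vph-\log\lap\psi\geq 0,
\]
so $(\vph-\psi)(x,t)$ is pointwise non-decreasing in $t$ at each $x\in M$.

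Third, for the initial-data step I would apply Lemma \ref{upper_semi_lem} to the subharmonic $\psi$ with constant spatial sequence $x_n\equiv x$, to obtain $\limsup_{t\downto 0}\psi(x,t)\leq\vph_0(x)$ for every $x\in M$. Since $\vph(x,t)\to\vph_0(x)$ pointwise by Lemma \ref{potential_construct_lem}, this yields $\liminf_{t\downto 0}(\vph-\psi)(x,t)\geq 0$ pointwise. Combining with the monotonicity from the previous step, for every $(x,t)\in M\times(0,T)$ we have
\[
(\vph-\psi)(x,t)\geq \liminf_{s\downto 0}(\vph-\psi)(x,s)\geq 0,
\]
which is the desired conclusion $\psi(t)\leq\vph(t)$.

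The principal obstacle is the canonicity claim in the second step. The first part of the lemma explicitly asserts only $g_{max}\geq g$ for the particular input flow $g$, whereas we must invoke it with the $\psi$-induced flow as input and still obtain the same $g_{max}$ (or at least one dominating $v$). This should follow from the construction being intrinsic to $\mu$—presumably produced by a monotone approximation of $\mu$ by smooth data, or a supremum over all smooth conformal Ricci flows attaining $\mu$—so that $g_{max}$ automatically dominates \emph{every} admissible flow, including ones that are merely smooth and not complete. Making this precise, while respecting the asymmetry between the complete $g_{max}$ and the possibly incomplete flow induced by $\psi$, is the real technical labour behind the argument.
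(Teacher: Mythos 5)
Your reduction has a genuine gap at its crux: the claim in your second step that $g_{max}$ dominates the (possibly incomplete) Ricci flow with conformal factor $v=\lap\psi$. At the stage of Lemma \ref{core_lem}, the flow $g_{max}$ is \emph{not} canonical in $\mu$: it is built from the specific input flow $g(t)$ (the approximating flows $u_{R,\de}$ start from $g(\de)$ restricted to $B_R$), and the first part of the lemma only asserts $g_{max}(t)\geq g(t)$ for that particular $g$. The statement that $g_{max}$ lies above \emph{every} conformal Ricci flow attaining $\mu$ weakly is exactly the content of Theorem \ref{DC_max_stretch_thm}, and in the paper's logical order that theorem is \emph{deduced from} the potential comparison $\psi\leq\vph$ you are trying to prove (the proof of Theorem \ref{DC_max_stretch_thm} applies Lemma \ref{core_lem} twice and plays the two potential inequalities off against each other). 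So invoking "canonicity of the construction" or "$u_{max}\geq\lap\psi$" here is circular; your own closing paragraph correctly identifies this as the missing piece, but the suggested fixes (a supremum over all attaining flows, or an approximation intrinsic to $\mu$) are not what the construction does and would themselves require essentially the full strength of the theorem.

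The paper instead proves $\psi\leq\vph$ directly, without ever knowing $\lap\psi\leq\lap\vph$ in advance: it compares $\psi$ with the localized potentials $\vph_R$ on each ball $B_R$ by a bare-hands maximum principle. The hyperbolic lower bound $u_R\geq 2t h_R$ (Yau's Schwarz lemma plus Chen's estimate) forces $\vph_R$ to blow up near $\partial B_R$, which replaces any boundary condition; Lemma \ref{upper_semi_lem} together with \eqref{vphR_initial_control} rules out a first contact at $t=0$; and a $\de$-shifted, $\de$-lifted potential $\vph_{R,\de}$ turns an interior first-contact point into a contradiction between $\pt(\vph_{R,\de}-\psi)\leq 0$ and $\log\lap\vph_{R,\de}+\de-\log\lap\psi\geq\de$. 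Letting $R\upto R_*$ and using $\vph=\lim\vph_R$ then gives the claim. Separately, note a smaller flaw in your final step: at points $x$ with $\vph_0(x)=-\infty$ both $\vph(x,t)$ and $\psi(x,t)$ tend to $-\infty$, so $\liminf_{t\downto 0}(\vph-\psi)(x,t)\geq 0$ does not follow directly; you would need to argue on the full-measure set $\{\vph_0>-\infty\}$ and conclude by continuity in $x$ for fixed $t>0$. That issue is repairable; the circularity in the domination step is the real obstruction.
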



\cmt{In our last paper, \cite{TY3}, we should add explicitly in Lemma 4.2 (and Lemma 4.3 automatically) that $\mu$ is nontrivial. Be careful if we use the $\mu$ trivial case of those lemmas (e.g. on a disc)}


\begin{proof}[{Proof of Lemma \ref{core_lem}}]
The only case in which $T=0$ is the case that $M$ is the plane and $\mu$ is the trivial measure.
Our sole task in this case is to show the nonexistence of any 
smooth  conformal Ricci flow  $g(t)$ on $M$, for $t\in (0,\ti T)$,
$\ti T>0$,
that attains the trivial measure $\mu$ as initial data.
Suppose such a flow exists. Then
for each $\ti R>R>0$, and 
$0<s\leq t< \ti T$,  \cite[Lemma 3.2]{TY3} gives us
\beq
\label{controlled_mass_gain_again}
\Vol_{g(t)}(B_R)\leq (t-s)\eta +\Vol_{g(s)}(B_{\ti R})
\eeq
where the proof establishes that $\eta$ is the volume of $B_R$ with respect to the complete conformal hyperbolic metric on $B_{\ti R}$. 
Sending $s\downto 0$ gives
$$\Vol_{g(t)}(B_R)\leq \eta t$$
but $\eta\to 0$ as  $\ti R\to\infty$, so
$\Vol_{g(t)}(B_R)=0$. Sending $R\to\infty$ then 
gives $\Vol_{g(t)}(\R^2)=0$, which is a contradiction.
Thus from now on we may assume that $T>0$,
though $\mu$ could still be the trivial measure on the disc.

As a warm-up to the techniques involved in constructing $g_{max}(t)$ we first reduce the lemma to the case that the given Ricci flow $g(t)$ is complete, and $\ti T=T$. 
This will be possible by replacing $g(t)$ by the flow $\hat g(t)$ arising in the following claim.

{\bf Claim 1:}
There exists a smooth conformal \emph{complete} Ricci flow $\hat g(t)$ on $M$, for $t\in (0,T)$,
weakly attaining $\mu$ as initial data, and 
with $\hat g(t)\geq g(t)$ for $t\in (0,T\wedge\ti T)$. Moreover, if $T<\infty$
then $\mu_{\hat g(t)}(M)\to 0$ as $t\upto T$ so we must have $\ti T\leq T$.

\emph{Proof of Claim 1:}
For each $\de\in (0,\ti T)$, we take $g(\de)$ as smooth (possibly incomplete) initial data for a Ricci flow 
on $M$. The previous two-dimensional theory \cite{GT2, ICRF_UNIQ} tells us that 
there exists a unique  Ricci flow $g_\de(t)$, $t\in [\de,T_\de)$ 
(for some $T_\de\in (\de,\infty]$)
that is complete for $t\in (\de,T_\de)$,
and for which $g_\de(\de)=g(\de)$.
Moreover, the earlier theory tells us that if $T_\de<\infty$ then 
$\mu_{g_\de(t)}(M)\to 0$ as $t\upto T_\de$, and that
$g_\de(t)\geq g(t)$ at times at which both flows exist because $g_\de(t)$ is maximally stretched \cite{GT2}, i.e., it lies above any other smooth Ricci flow with the same, or lower, initial data. 
We conclude that $g_\de(t)$ must exist
at least for $t\in [\de,\ti T)$. 


Moreover, for $0<\de_1\leq\de_2<\ti T$, 
appealing again to the maximally stretched property of $g_{\de_1}(t)$,
we have 
$g_{\de_1}(t) \geq g_{\de_2}(t)$ for $t\in [\de_2,\ti T)$, i.e.
for times $t$ at which both flows exist.

\cmt{Below, when we apply \cite[Lemma 4.2]{TY3}, we may want to rephrase in the case that $\mu$ is the trivial measure on the disc. This depends on how we eventually correct the statement of that lemma.}

If we pick a sequence $\de_i\downto 0$, then
$t\mapsto g_{\de_i}(t+\de_i)$
gives a sequence of Ricci flows whose initial data converges weakly to $\mu$.
We can then apply \cite[Lemma 4.2]{TY3} to deduce that after passing to a subsequence
in $i$, there exists $T_0\in (0,\ti T)$ such that the flows converge smoothly locally on 
$M\times (0,T_0)$ 
to a new Ricci flow $\hat g(t)$ on $M$, for $t\in (0,T_0)$, that  has $\mu$ as its initial data (as for $g(t)$) but that is now  complete. 
Translating the $i$th flow by $\de_i$ in time, we find that the Ricci flows $g_{\de_i}(t)$
also converge smoothly locally on $M\times (0,T_0)$ 
to $\hat g(t)$. Because $g_{\de_i}(t)\geq g(t)$, we also have 
$\hat g(t)\geq g(t)$ on $M\times (0,T_0)$. We would now like to extend $\hat g(t)$ in time.
A basic consequence of $\hat g(t)$ attaining $\mu$ as initial data is that
$$\mu(M)\leq\liminf_{t\downto 0}\mu_{\hat g(t)}(M).$$
We can then extend $\hat g(t)$ by taking the unique complete Ricci flow starting with 
$\hat g(\eta)$, for small $\eta\in (0,T_0)$, using \cite{GT2, ICRF_UNIQ}. This extension exists for all time if $M$ is the disc, while if $M$ is the plane then it exists for a
time 
$$\frac{\mu_{\hat g(\eta)}(M)}{4\pi}
\xrightarrow{\liminf}
\frac{\mu(M)}{4\pi}=T$$
as $\eta\downto 0$.
Indeed, in this case the extension will satisfy
$$\mu_{\hat g(t)}(M)=\left(1-\frac{t}{T}\right)\mu(M)\to 0$$
as $t\upto T$.
Because $\hat g(t)$ is maximally stretched, we can extend the inequality
$\hat g(t)\geq g(t)$ to $M\times (0,T\wedge\ti T)$.
Then $g(t)$ cannot exist beyond time $T$ since it would have no volume left, 
and so $\ti T\leq T$. 
\hfill \textit{End of proof of Claim 1.}

\medskip


As a result,  we may 
replace $g(t)$, for $t\in (0,\ti T)$, by the complete flow $\hat g(t)$ for $t\in (0,T)$,
as claimed before Claim 1. We write $u(t)$ for the conformal factor of what we are now calling $g(t)$.



We now turn to constructing the $g_{max}(t)$ required in the lemma as the limit of flows defined on smaller domains. At this stage of the argument we are unsure whether or not the Ricci flow we have just constructed in Claim 1 (now renamed $g(t)$) can serve as the required $g_{max}(t)$. 
It will turn out to be the same, but this fact is not clear at this stage.

In the following, 
we set $R_*=1$ if $M$ is the disc, and $R_*=\infty$ if $M$ is the plane, so that 
$B_R\subset\subset M$ for each $R\in (0,R_*)$.

{\bf Claim 2:}
For $R\in (0,R_*)$, there exists a smooth conformal complete Ricci flow $g_R(t)$ on $B_R$, $t\in (0,\infty)$, with conformal factor $u_R(t)$, such that 
$\mu_{g_R(t)}\weakto \mu$ weakly as $t\downto 0$, where $\mu$ is restricted to $B_R$, 
and with the monotonicity property that if $0<R_1<R_2<R_*$ then 
$$u_{R_1}(t)\geq u_{R_2}(t)\quad \text{ throughout }B_{R_1} 
\text{ for }t\in (0,\infty),$$ 
and so that 
$$u_{R_2}(t)\geq u(t) \quad \text{ throughout }B_{R_2}\text{ for }t\in (0,T).$$

\emph{Proof of Claim 2:}
For each $R\in (0,R_*)$, and each $\de\in (0,T)$, take the unique 
instantaneously complete Ricci flow on $B_R$, for $t\in [\de,\infty)$, 
with conformal factor denoted by $u_{R,\de}$ satisfying $u_{R,\de}(\de)=u(\de)$
on $B_R$, where we recall that $u(t)$ is the conformal factor of $g(t)$.

Certainly $u_{R,\de}(t)\geq u(t)$ for all $t\in [\de,T)$ because 
an instantaneously complete solution (with smooth initial metric) 
such as that corresponding to $u_{R,\de}(t)$
is always larger than any other solution with the same initial data as a result of the maximally stretched property \cite{GT2}.
Similarly, 
for $0<\de_1\leq\de_2< T$, 
because $u_{R,\de_1}(\de_2)\geq u(\de_2)=u_{R,\de_2}(\de_2)$
on $B_R$, 
we have 
\beq
\label{delta_mono}
u_{R,\de_1}(t)\geq u_{R,\de_2}(t)\quad\text{ on }B_R\text{ for all }t\geq\de_2.
\eeq
Moreover,
if $0<R_1<R_2<R_*$ then 
$u_{R_1,\de}(t)\geq u_{R_2,\de}(t)$, throughout $B_{R_1}$, for all 
$t\in [\de,\infty)$ because 
$u_{R_1,\de}(t)$ is maximally stretched on $B_{R_1}$.

\cmt{again, we may want to eventually rephrase the wording below depending on the way that we end up handling the $\mu\equiv 0$ case of \cite[Lemma 4.2]{TY3}}

We can now mimic the proof of Claim 1 and 
appeal to \cite[Lemma 4.2]{TY3}, with $M$ there equal to $B_R$ here, 
to find that there exists $\de_i\downto 0$ such that 
the conformal factors $u_{R,\de_i}(t)$ converge smoothly locally on $B_R\times (0,T_R)$ 
(some $T_R>0$)
to the conformal factor $u_R:B_R\times (0,T_R)\to (0,\infty)$ of a smooth conformal complete Ricci flow $g_R(t)$ on $B_R$ whose volume measures converge weakly to (the restriction of)
$\mu$ as $t\downto 0$.
By construction, the monotonicity \eqref{delta_mono} of $u_{R,\de}(t)$ in $\de$ implies that 
$u_{R,\de}(t)\leq u_R(t)$ for $t\in [\de,T_R)$.


Because of the phrasing of \cite[Lemma 4.2]{TY3},
this only gives the compactness on some time interval $(0,T_R)$. However, 
if we can verify that we have  local positive lower and upper bounds for 
$u_{R,\de}(t)$ on $B_R\times (T_R/2,\infty)$ that are uniform as $\de\downto 0$
then we can invoke parabolic regularity theory and pass to a subsequence to 
get smooth local convergence of $u_{R,\de_i}(t)$ to an extended 
$u_R:B_R\times (0,\infty)\to (0,\infty)$.
Indeed, by invoking Yau's Schwarz lemma  and 
B.-L. Chen's scalar curvature estimate   $K_{g(t)}\geq -\frac{1}{2t}$ from \cite{strong_uniqueness}, as originating in \cite{GT1},
we have uniform lower bounds 
$u_{R,\de}(t)\geq 2(t-\de)h_R$, where $h_R$ is the conformal factor 
of the unique complete conformal hyperbolic metric on $B_R$, that are valid for all $t\geq \de$. Moreover, for any $r\in (0,R)$ if we pick sufficiently large $C_r<\infty$
so that $u_R(T_R/2)< C_r h_r$ on $B_r$, then also for $\de\in (0,T_R/2)$ we have
$u_{R,\de}(T_R/2)\leq u_R(T_R/2)< C_r h_r$ on $B_r$, and these upper bounds propagate forwards to give
$$u_{R,\de}(t+T_R/2)\leq  (C_r+2t) h_r$$ 
on $B_r$, by the maximally stretched property of the Ricci flow 
$t\mapsto (C_r+2t) h_r$.
These lower and upper bounds on the conformal factors $u_{R,\de}(t)$
suffice to give compactness on $B_R\times (0,\infty)$.

Moreover, these limit flows $u_R(t)$ inherit the ordering that 
$u_{R}(t)\geq u(t)$ on $B_R$ for all $t\in (0,T)$,
and also that 
if $0<R_1<R_2<R_*$ then 
$u_{R_1}(t)\geq u_{R_2}(t)$, throughout $B_{R_1}$, for all 
$t\in (0,\infty)$.
\hfill \textit{End of proof of Claim 2.}

\medskip

If we restrict our attention to an arbitrary compact set $K$ in $M$, the inequalities
for the conformal factors from Claim 2 are enough to obtain  positive lower and upper bounds on the conformal factors $u_R$ on $K$  that are 
uniform in $R$ as $R\upto R_*$. 
Thus we can find a sequence $R_i\upto R_*$ so that 
$u_{R_i}(t)$ converges smoothly locally on $M\times (0,T)$ to a limit $u_{max}(t)$, 
which is the conformal factor of a Ricci flow $g_{max}(t)$.
By construction, we have $u(t)\leq u_{max}(t)$ on $M$ for all $t\in (0,T)$.
One consequence is that $g_{max}(t)$  inherits the completeness from $g(t)$.
We also have, for all $R\in (0,R_*)$, that $u_{max}(t)\leq u_{R}(t)$ throughout
$B_R\times (0,T)$.

Furthermore, because $u_{max}(t)$ is sandwiched between $u(t)$ and $u_R(t)$, both of which weakly attain $\mu$ as initial data, we see that $u_{max}(t)$ does also.

We have constructed the Ricci flow $g_{max}(t)$ that is required in the lemma, 
and this induces the potential $\vph$ (with initial data $\vph_0$) by Lemma \ref{potential_construct_lem}. It remains to investigate how $\vph$ relates to the potential $\psi$. 
In order to do this, we will relate $\vph$ to approximating potentials on subdomains of $M$. For each $R\in (0,R_*)$, we feed the Ricci flows with conformal factors $u_R$ into Lemma \ref{potential_construct_lem}, with $M$ there equal to the ball $B_R$ here, and $T$ there equal to $\infty$ here, to yield a potential 
$\vph_R\in C^\infty(B_R\times (0,\infty))$ satisfying 
$$\lap \vph_R(t)=u_R(t)>0\qquad\text{ and }\qquad \pl{\vph_R}{t}=\log\lap\vph_R$$
throughout, and with initial data $(\vph_R)_0$ satisfying $\lap(\vph_R)_0=\mu$
weakly. Since $\vph_0-(\vph_R)_0$ is weakly harmonic on $B_R$, by redefining $\vph_R$ by adding a time-independent harmonic function we may assume that 
$(\vph_R)_0$ agrees with $\vph_0$. We emphasise that by \eqref{vph_0_char_new}, not only do
we have $\vph_R(t)\to\vph_0$ in $L^1_{loc}(B_R)$ as $t\downto 0$, but also 
the convergence holds pointwise.

\cmt{slightly subtle above: $\vph_0-(\vph_R)_0$ is being called *weakly* harmonic to emphasise a little that it agrees with a harmonic function a.e.}

The potentials $\vph_R$ inherit the monotonicity of $u_R$ with respect to $R$. Indeed, for $0<R_1<R_2<R_*$ and $x\in B_{R_1}$ with $\vph_0(x)>-\infty$, we have
$$\vph_{R_1}(x,t)=\vph_0(x)+\int_0^t \log u_{R_1}(s)ds
\geq \vph_0(x)+\int_0^t \log u_{R_2}(s)ds=\vph_{R_2}(x,t),$$
for every $t>0$,
and then by continuity, for  every point $x\in B_{R_1}$ and every $t>0$ we have
$$\vph_{R_1}(x,t)\geq \vph_{R_2}(x,t).$$
Similarly, for $R\in (0,R_*)$, because $u_{max}(t)\leq u_{R}(t)$ throughout
$B_R\times (0,T)$, we have
\beq
\label{vphRvph}
\vph_{R}\geq \vph
\eeq
throughout $B_R\times (0,T)$.
%
%
This inequality will have multiple applications. The first is that we can combine with the monotonicity of $\vph_R$ with respect to $R$ in order to define 
$\ti\vph: M\times(0,T)\to\R$ by
$$\ti\vph(x,t)=\lim_{R\upto R_*} \vph_R(x,t),$$
and to deduce that 
\beq
\label{ti_vph_vph_ineq}
\ti\vph\geq \vph.
\eeq
Moreover, the function $\ti\vph$ will be smooth. To see this, pick an arbitrary compact set $K\subset M\times(0,T)$ and choose $\ti R\in (0,R_*)$ sufficiently large so that 
$K\subset B_{\ti R}\times (0,T)$.
For $R\in [\ti R,R_*)$, the potentials $\vph_R$, restricted to $B_{\ti R}\times (0,T)$, will be sandwiched between $\vph$ and $\vph_{\ti R}$. 
Meanwhile, we already showed that $u_{R_i}(t)$ converges smoothly locally on $M\times (0,T)$. Elliptic regularity theory applied  to the equation
$$\lap\vph_{R_i}(t)=u_{R_i}(t),$$
for sufficiently large $i$, gives  control on all spatial derivatives of 
$\vph_{R_i}(t)$ over $K$ that is uniform as $i\to\infty$. By appealing to the equation 
$$\pl{\vph_{R_i}}{t}=\log\lap\vph_{R_i}$$
we then obtain  $C^k$ space-time control of $\vph_{R_i}$ over $K$ that is uniform in $i$.
By passing to a further subsequence we see that $\vph_{R_i}$ converges smoothly locally, and thus its pointwise limit $\ti \vph$ is smooth.


We claim now that we have equality in the inequality \eqref{ti_vph_vph_ineq}.
Because we have established that $\ti\vph$ is smooth, it suffices to prove the equality at points $x\in M$ where $\vph_0>-\infty$ and at times $t\in (0,T)$.
Choose $\ti R>|x|$ so that $x\in B_{\ti R}$.
For arbitrary $\ep>0$, we can pick $\de\in (0,t)$ sufficiently small so that 
$$|\vph(x,\de)-\vph_0(x)|<\ep/3$$
and 
$$|\vph_{\ti R}(x,\de)-\vph_0(x)|<\ep/3.$$
For $R\in (\ti R,R_*)$, $\vph_R(x,\de)$ is sandwiched above $\vph(x,\de)$
and below $\vph_{\ti R}(x,\de)$, and so 
$$|\vph_{R}(x,\de)-\vph_0(x)|<\ep/3$$
also.
Meanwhile, 
$$\vph_{R_i}(x,t)-\vph_{R_i}(x,\de)=\int_\de^t \log u_{R_i}(x,s)ds$$
and 
$$\vph(x,t)-\vph(x,\de)=\int_\de^t \log u_{max}(x,s)ds$$
and by the smooth local convergence of $u_{R_i}$ to $u_{max}$, for sufficiently large $i$ 
we have
$$|(\vph_{R_i}(x,t)-\vph_{R_i}(x,\de))-(\vph(x,t)-\vph(x,\de))|<\ep/3.$$
In particular, for sufficiently large $i$ also to ensure that $R_i\geq \ti R$, we have
\beqa
|\vph_{R_i}(x,t)-\vph(x,t)|
&\leq 
|(\vph_{R_i}(x,t)-\vph_{R_i}(x,\de))-(\vph(x,t)-\vph(x,\de))|\\
&\quad +|\vph_{R_i}(x,\de)-\vph_0(x)|
 +|\vph(x,\de)-\vph_0(x)|\\
&< \ep.
\eeqa
Because $\ep>0$  was arbitrary,
taking the limit $i\to\infty$ forces
$$\vph(x,t)=\lim_{i\to\infty}\vph_{R_i}(x,t)=:\ti\vph(x,t)$$
as required.

The next step is to show that as $R$ increases, making $\vph_R(x,t)$ decrease, 
these potentials will remain above the  globally-defined potential $\psi$ considered 
in the statement of the lemma, once restricted to $B_R$.

{\bf Claim 3:}
For every $R\in (0,R_*)$, 
we have 
$$\psi\leq \vph_R$$
throughout $B_R\times (0,T)$.

Notice that if this claim is true then we can apply it for $R=R_i$ and let $i\to\infty$ to obtain that for every $x\in M$ and $t\in (0,T)$ we have
$$\psi(x,t)\leq \lim_{i\to\infty}\vph_{R_i}(x,t)=\vph(x,t)$$
which will complete the proof of Lemma \ref{core_lem}.

\emph{Proof of Claim 3:}
It suffices to prove the claim over $B_R\times (0,T_1)$ for arbitrary $T_1\in (0,T)$; we fix such a $T_1$.

Since $u_R$ is the conformal factor of a complete Ricci flow on $B_R$, $t>0$, 
we can appeal again to the idea from \cite{GT1} and use Yau's Schwarz lemma to deduce that
$$u_R(t)\geq 2th_R$$
on $B_R$, for every $t>0$, where $h_R$ is the conformal factor of the unique complete conformal hyperbolic metric on $B_R$. 
This lower bound for $u_R$ gives us lower control on $\vph_R$ because
for every $x\in B_R$ 
and every $t\geq \tau>0$, we have
\beqa
\label{vph_R_lower}
\vph_{R}(x,t) &= \vph_R(x,\tau)+\int_\tau^t \log u_{R}(x,s)ds\\
&\geq \vph_R(x,\tau)+\int_\tau^t \log (2sh_{R}(x))ds\\
&= \vph_R(x,\tau)+(t-\tau)\log (2h_R(x)) +t(\log t -1)-\tau(\log\tau -1).
\eeqa
Explicitly we have
\begin{equation}
\label{eqn:hR}
h_R(x)=\left(\frac{2R}{R^2-|x|^2}\right)^2\geq \frac{4}{R^2},
\end{equation}
and so \eqref{vph_R_lower} gives both control on how $\vph_R(t)$ blows up near the boundary of $B_R$ and also on how fast $\vph_R(t)$ can decrease in time.
For the latter, for points $x\in B_R$ with $\vph_0(x)>-\infty$, 
we can send $\tau\downto 0$ in \eqref{vph_R_lower} to deduce that 

%
\beqa
\label{moon}
\vph_{R}(x,t) &\geq \vph_0(x)+ t(\log (2h_R(x))+\log t -1\big)\\
&\geq \vph_0(x)- F(t),
\eeqa
where 
$$\textstyle
F(t):=t(1-\log \frac{8t}{R^2}).$$
As a consequence, for each $\de>0$ we can define a modified potential
$$\vph_{R,\de}(t):=\vph_R(t+\de)+F(\de)+\de(1+t)$$
on $B_R\times (0,\infty)$ so that $\vph_{R,\de}(0)$ is a smooth function on $B_R$ that is bounded below, and so that 
\beq
\label{vphR_initial_control}
\vph_{R,\de}(0)\geq \vph_0+\de.
\eeq

\cmt{although we earlier assumed that $\vph_0(x)>-\infty$, this last statement is true generally}


{\bf Subclaim:} For  $\de>0$ chosen sufficiently small so that $T_1+\de<T$, we have 
\beq
\label{subclaim_est}
\psi< \vph_{R,\de}
\eeq
throughout $B_R\times (0,T_1)$.

Observe that if we can prove this subclaim, then by taking a limit $\de\downto 0$
we will have proved Claim 3, and hence the whole lemma.

At a heuristic level, \eqref{subclaim_est} is true at $t=0$ by \eqref{vphR_initial_control} because $\psi$ takes $\vph_0$ as initial data. However, $\psi$ only admits initial data as a $L^1_{loc}$ limit, so this statement requires care.

We will establish the subclaim using a classical maximum principle approach. In lieu of an ordering like \eqref{subclaim_est}
on the boundary $\partial B_R$, we will need to show that $\vph_{R,\de}-\psi$ blows up in an appropriately uniform sense near the boundary and we will achieve this by proving an estimate of the form 
\beq
\label{boundary_control}
\vph_{R,\de}(t)-\psi(t)\geq \frac{\de}{2}\log(h_R)-C_1,
\eeq
throughout $B_R\times (0,T_1]$, where 
$C_1<\infty$ is independent of the point in $B_R\times (0,T_1]$ considered.

There are three ingredients for this. 
The first is that 
the potential $\psi$ has a finite upper bound $\overline{L}$ over $\overline{B_R}\times (0,T_1]$.
If this were not the case then we would be able to find a sequence 
$x_n\in \overline{B_R}$, and times $t_n\downto 0$, with
$\psi(x_n,t_n)\to\infty$. By passing to a subsequence, we could assume that $x_n\to x$ for some $x\in \overline{B_R}$.
But 
Lemma \ref{upper_semi_lem} would tell us that 
$$\limsup_{n\to\infty}\psi(x_n,t_n)\leq \vph_0(x)
<\infty,$$
giving a contradiction. We conclude that 
$$\psi\leq \overline{L}\qquad\text{ on }\overline{B_R}\times (0,T_1].$$
The second ingredient for \eqref{boundary_control} is that $\vph$ has a finite \emph{lower} bound $\underline{L}$ over
$\overline{B_R}\times [\de/2,T_1+\de]$. 
This is simply by continuity of $\vph$ and compactness of the space-time region considered.
An immediate consequence (see \eqref{vphRvph}) is that the larger function $\vph_R$ has the same lower bound over the part of the region considered where it is defined, i.e.
$$\vph_R\geq \underline{L}\qquad \text{ on }B_R\times [\de/2,T_1+\de].$$
The third and final ingredient is 
\eqref{vph_R_lower} with $\tau=\de/2$. This implies that
throughout $B_R$ and 
for $t\in [\de/2,T_1+\de]$ we have
\beqa
\vph_{R}(t) &\geq \textstyle \vph_R(\frac{\de}{2})+(t-\frac{\de}{2})\log (2h_R) +t(\log t -1)
-\frac{\de}{2}(\log\frac{\de}{2} -1)\\
&\geq \textstyle \underline{L} + (t-\frac{\de}{2})\log\left(\frac{R^2h_R }{4}\right)
+(t-\frac{\de}{2})\log \frac{8}{R^2} +t(\log t -1)
-\frac{\de}{2}(\log\frac{\de}{2} -1)\\
&\geq \textstyle(t-\frac{\de}{2}) \log\left(\frac{R^2h_R }{4}\right)-C_0,
\eeqa
where $C_0<\infty$ depends on $\underline{L}$, $T_1$, $R$ and  $\de$.
In particular, for $t\in (0,T_1]$ this gives
\beqa
\vph_{R,\de}(t)-\psi(t)
&\geq \frac{\de}{2}\log\left(\frac{R^2h_R }{4}\right)-C_0+F(\de)-\overline{L}\\
&\geq \frac{\de}{2}\log(h_R)-C_1,
\eeqa
where $C_1<\infty$ depends on $\overline{L}$, $\underline{L}$, $T_1$, $R$ and  $\de$.
This completes the proof of \eqref{boundary_control}.

We now complete the proof of the subclaim (and hence the whole lemma) by contradiction.
If it fails for some valid $\de$ then we can find 
a point in $B_R\times (0,T)$ where $\psi\geq\vph_{R,\de}$. We claim that then we can find a point
$(x_0,t_0)\in B_R\times (0,T)$ so that $\psi(x_0,t_0)=\vph_{R,\de}(x_0,t_0)$ but so that for all $t\in (0,t_0)$ we have $\psi(t)<\vph_{R,\de}(t)$ throughout $B_R$.
To find this point $(x_0,t_0)$, first define 
$$t_0:=\inf\{t\in (0,T)\ :\ \exists\,x\in B_R\text{ with }
\psi(x,t)\geq \vph_{R,\de}(x,t)\}\in [0,T).$$
We can then take sequences $t_n\downto t_0$ and $x_n\in B_R$ with 
$$\psi(x_n,t_n)\geq \vph_{R,\de}(x_n,t_n).$$
After passing to a subsequence we may assume that $x_n\to x_0\in B_R$.
Note that $x_0$ cannot lie on $\partial B_R$ because of the 
control near the boundary given by \eqref{boundary_control}.

If $t_0=0$, then we appeal to Lemma \ref{upper_semi_lem}. 
This gives us the final inequality of
$$\vph_{R,\de}(x_0,0)=\lim_{n\to\infty} \vph_{R,\de}(x_n,t_n)
\leq \limsup_{n\to\infty} \psi(x_n,t_n)
\leq \vph_0(x_0),$$
which contradicts \eqref{vphR_initial_control}.
Thus $t_0>0$ and we find that 
\beq
\label{soon_to_be_equality}
\vph_{R,\de}(x_0,t_0)=\lim_{n\to\infty} \vph_{R,\de}(x_n,t_n)
\leq \lim_{n\to\infty} \psi(x_n,t_n) = \psi(x_0,t_0).
\eeq
But by definition of $t_0$, for every $t\in (0,t_0)$ we have $\psi(t)<\vph_{R,\de}(t)$, 
so 
\beq
\label{second_deriv_prep}
\psi(t_0)\leq \vph_{R,\de}(t_0)
\eeq
and we must have equality throughout \eqref{soon_to_be_equality}, and we have found the required point $(x_0,t_0)$.

It remains to compute derivatives at this point $(x_0,t_0)$ in order to generate a contradiction.
Applying the second derivative test to $\vph_{R,\de}(t_0)-\psi(t_0)$ at $x_0$, where it achieves its minimum value of zero, we find that 
\beq
\label{lap_ineq}
\lap(\vph_{R,\de}-\psi)\geq 0
\eeq
at $(x_0,t_0)$.
By the characterisation of $t_0$, we must have
\beq
\label{ddt_ineq}
\pl{}{t}\left(\vph_{R,\de}-\psi\right)\leq 0
\eeq
at $(x_0,t_0)$.
But 
$$\pl{}{t}\vph_{R,\de}(x,t)=\pl{}{t}\vph_{R}(x,t+\de)+\de
=\log\lap\vph_{R}(x,t+\de)+\de
=\log\lap\vph_{R,\de}(x,t)+\de
$$
while 
$$\pl{\psi}{t}=\log\lap\psi.$$
Evaluating at $(x_0,t_0)$, subtracting and applying \eqref{lap_ineq} gives
$$\pl{}{t}\left(\vph_{R,\de}-\psi\right)
=\log\lap\vph_{R,\de}+\de-\log\lap\psi
\geq \de,
$$
which contradicts \eqref{ddt_ineq}.
\end{proof}

We can derive Theorem \ref{DC_max_stretch_thm} rapidly from Lemma \ref{core_lem}.


\begin{proof}[{Proof of Theorem \ref{DC_max_stretch_thm}}]
As a preliminary step, we consider the case that $\mu$ is the trivial measure 
and $M$ is the plane. In this case $T=0$ so the existence of $g_{max}(t)$ is a vacuous statement. 
Moreover, it is a consequence of Lemma \ref{core_lem} that no Ricci flow $\ti g(t)$ as in the theorem can exist because $\ti T>0$ is impossible.
This completes the proof in that case.


In all remaining cases, we have $T>0$. We start by picking a smooth complete conformal Ricci flow $g(t)$ on $M$ for $t\in (0,T)$ with $\mu$ as initial data. 
In the case that $M$ is the disc and $\mu$ is the trivial measure
we simply take $g(t)$ to be the big-bang Ricci flow with conformal factor $2th$, where $h=h_1$ (as in \eqref{eqn:hR}) is the conformal factor of the Poincar\'e metric. 


In the remaining cases in which $\mu$ is not the trivial measure,
we can use  our previous existence result \cite[Theorem 1.2]{TY3}
to find a smooth complete conformal Ricci flow $g(t)$ for $t\in (0,T)$ such that 
$\mu_{g(t)}\weakto \mu$ as $t\downto 0$.
This means that we can apply Lemma \ref{core_lem}, even for $\ti T=T$. 
The output of the lemma is a new Ricci flow $g_{max}(t)\geq g(t)$
for $t\in (0,T)$ with $\mu$ as initial data, and also a potential $\vph$ with the properties described in the lemma, and with $t\downto 0$ limit $\vph_0$. We denote the conformal factor of $g_{max}(t)$ by $u(t)$.

We claim that this $g_{max}(t)$ serves as the $g_{max}(t)$ required in  Theorem \ref{DC_max_stretch_thm}.
For this to be true we must establish that for every $\ti g(t)$, $t\in (0,\ti T)$ 
as in the theorem, we have $\ti T\leq T$ and 
$g_{max}(t)\geq \ti g(t)$ for all $t\in (0,\ti T)$.
To see this we will make a second application of Lemma \ref{core_lem}, this time with $\ti T$ there equal to $\ti T$ here, but with $g(t)$ there equal to $\ti g(t)$ here.
The output of the lemma is a flow that we call $\ti g_{max}(t)$, $t\in (0,T)$, 
to distinguish it from the $g_{max}(t)$ already introduced in this proof. 
Similarly, we call the  new potential $\ti\vph$, 
its $t\downto 0$ limit $\ti\vph_0$, and the new conformal factor $\ti u(t)$.

We would like to exploit our knowledge of the potentials $\vph$ and $\ti\vph$ to prove that, in fact, the two Ricci flows $g_{max}(t)$ and $\ti g_{max}(t)$ coincide.
If we achieve that then we will have proved that 
$$g_{max}(t)=\ti g_{max}(t)\geq \ti g(t)$$
for all $t\in (0,\ti T)$, as required.

To establish this equality we first consider the modified potential
$$\psi(t):=\ti \vph(t) +\vph_0-\ti \vph_0 .$$
Note that the static function $\vph_0-\ti \vph_0$ is harmonic 
(not just a weakly harmonic function that is an almost-everywhere representative of a smooth harmonic function)
by the characterisations of $\vph_0$ and $\ti \vph_0$
given by \eqref{potential_evol_eq} of Lemma \ref{potential_construct_lem}.
Consequently $\psi$ satisfies the potential function evolution equation \eqref{potential_evol_eq}. It also satisfies 
$\psi(t)\to\vph_0$ in $L^1_{loc}(M)$ as $t\downto 0$, so the first application of Lemma \ref{core_lem} tells us that $\psi\leq \vph$ throughout $M\times (0,T)$, i.e.
\beq
\label{one_way}
\ti \vph(t) -\vph(t) \leq \ti \vph_0 -\vph_0.
\eeq
On the other hand, we could define a modified potential 
$$\ti \psi(t):=\vph(t) -\vph_0+\ti \vph_0,$$
which would again satisfy \eqref{potential_evol_eq}, and this time satisfy
$\ti \psi(t)\to\ti\vph_0$ in $L^1_{loc}(M)$ as $t\downto 0$. 
We can then use the information from the \emph{second} application of 
Lemma \ref{core_lem} to obtain that $\ti\psi\leq \ti\vph$ throughout $M\times (0,T)$, i.e.
\beq
\label{other_way}
\ti\vph(t)- \vph(t)\geq  \ti \vph_0-\vph_0.
\eeq
Comparing \eqref{one_way} and \eqref{other_way} we find that 
$$\ti\vph(t)- \vph(t)\equiv   \ti \vph_0-\vph_0,$$
so
$$\ti u(t)-u(t)=\lap(\ti\vph(t)-\vph(t))=\lap(\ti\vph_0-\vph_0)=0.$$
We deduce that the conformal factors of $g_{max}(t)$ and
$\ti g_{max}(t)$ are equal so the flows themselves must coincide as required.
\end{proof}

\section{Proofs of the main theorems}

In this section, we prove Theorem \ref{main_exist_thm} and Theorem \ref{ordered_thm}.

\subsection{Proof of the main well-posedness theorem \ref{main_exist_thm}}
\label{main_thm_pf_sect}

Because we are claiming both existence and uniqueness, we may as well lift to the universal cover. In particular, the uniqueness assertion on the universal cover, once proved, will tell us that the flow will descend to the original manifold.
We thus reduce to the three cases that $M$ is $S^2$, the disc $D$  or the plane.

If  $M$ is $S^2$ then our task is  simplified because of the compactness of $S^2$. Indeed, on closed manifolds the existence and uniqueness statements that imply this case have already been proved in the more general K\"ahler setting by Guedj and Zeriahi 
\cite{GZ} and Di Nezza and Lu \cite{DL}.
For convenience, we translate the proof of uniqueness on $S^2$ into the language of this paper in Appendix \ref{sec:s2}.

Suppose then that we are in one of the remaining cases that $M$ is the disc or the plane.
If $T=0$ then $M$ is the plane and $\mu$ is trivial, and then both the uniqueness statement and the vacuous existence statement  of 
Theorem \ref{main_exist_thm} are already included in Theorem \ref{DC_max_stretch_thm}.
Therefore we may assume that $T>0$ from now on.
We obtain the existence of a maximally stretched solution $g_{max}(t)$, for the required length of time, from Theorem \ref{DC_max_stretch_thm}. 
We claim that this will serve as the $g(t)$ required 
in Theorem \ref{main_exist_thm}. 
To establish this it remains to prove uniqueness in these cases. 

For the disc case, let $\tilde{g}(t)$ be the other solution in the assumptions of Theorem \ref{main_exist_thm}. Since both $g(t)$ and $\tilde{g}(t)$ take the same measure $\mu$ as the initial data, we know that for every 
$\eta\in C_c^0(D)$,
\[
	\lim_{t\downto 0} \left(\int_D \eta d\mu_{g(t)} - \int_D \eta d\mu_{\tilde{g}(t)}\right) =0.
\]

For  $R\in (0,1)$, by picking any $\eta\in C_c^0(D,[0,1])$ with $\eta\equiv 1$ on $B_R$ and using 
the fact that $g(t)\geq \tilde{g}(t)$ (by Theorem \ref{DC_max_stretch_thm}), we have
\begin{equation}
	\label{eqn:initial}
	\lim_{\varepsilon\downto 0} \left(\mu_{g(\varepsilon)}(B_R) - \mu_{\tilde{g}(\varepsilon)}(B_R)\right)=0.
\end{equation}

For any $t>0$ for which both $g(t)$ and $\tilde{g}(t)$ are defined, we apply Lemma 3.3 of \cite{ICRF_UNIQ} to the time interval $[\varepsilon,t]$ to find that for $\frac{1}{2}<r_0<r_0^{1/3}<R<1$ and any fixed $\gamma\in (0,1/2)$, 
\begin{equation*}
\begin{aligned}
\left[ \mu_{g(t)}(B_{r_0})- \mu_{\tilde{g}(t)}(B_{r_0})\right]^{\frac{1}{1+\gamma}} \mkern-100mu  \\
 &\leq \left[ \mu_{g(\varepsilon)}(B_R) - \mu_{\tilde{g}(\varepsilon)}(B_R) \right]^{\frac{1}{1+\gamma}} \\
& \quad + C(\gamma) \left[ \frac{t-\varepsilon}{(-\log r_0) [\log (-\log r_0)-\log(-\log R)]^\gamma} \right]^{\frac{1}{1+\gamma}}.
\end{aligned}
\end{equation*}
By \eqref{eqn:initial}, taking $\varepsilon\downto 0$ first and then $R\upto 1$ in the above inequality, we obtain
\[
	\mu_{g(t)}(B_{r_0})=\mu_{\tilde{g}(t)}(B_{r_0})
\]
for any $r_0\in (1/2,1)$. Since $g(t)\geq \tilde{g}(t)$, we deduce that  $g(t)=\tilde{g}(t)$ as long as both of them are defined.

For the $\Real^2$ case, using the same notation as above, for any $R>0$ we still have \eqref{eqn:initial}. With the standard flat metric on $\Real^2$ as background, denote the conformal factors of $g(t)$ and $\tilde{g}(t)$ by $u$ and $\tilde{u}$ respectively. Notice that by the maximality in Theorem \ref{DC_max_stretch_thm}, $\tilde{u}(t)\leq u(t)$.

It is our goal to show $u(\tau)=\tilde{u}(\tau)$ for each $\tau>0$ for which both $\tilde{g}$ and $g$ are defined. For that purpose, we need the following lemma, which is close in spirit to \cite[Theorem 2.1]{ADE1997}, and closer still to the proof of the variant to be found in \cite[Theorem 4.4.1]{giesen_thesis}.

\begin{lem}
\label{initial_R2_lem}
Suppose $g(t)\geq \tilde{g}(t)$ are (conformal) Ricci flows on $\R^2$ for $t\in (0,\tau]$, with conformal factors $u(t)$ and $\tilde{u}(t)$ respectively, such that for some $\ep>0$ we have
\beq
\label{R2_lower_bd}
\tilde{u}(x,t)\geq \frac{\ep t}{(|x|\log|x|)^2}
\eeq
for every  $x\in\R^2\setminus B_2$ and every $t\in (0,\tau]$.
Then for $0<s<t\leq \tau$, $R>2$ and any $m\in (0,1)$ we have
\beqa
\label{initial_R2_lem_est}
\left(\int_{B_R}[u(t)-\tilde{u}(t)] dx\right)^{1-m}
 & \leq \left(\int_{B_{R^2}}[u(s)-\tilde{u}(s)] dx\right)^{1-m}\\
&\quad
+\frac{c_0(m)}{\ep^m (\log R)^{1-m}}[t^{1-m}-s^{1-m}]
\eeqa
for some constant $c_0$ depending only on $m$.
\end{lem}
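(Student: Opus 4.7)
The plan is to introduce a radial cutoff and reduce the lemma to a single differential inequality for a suitably weighted mass. Writing $w:=u-\tilde u \geq 0$, take $\phi(x):=\eta(\log|x|)^N$ where $\eta:\R\to[0,1]$ is smooth with $\eta\equiv 1$ on $(-\infty,\log R]$, $\eta\equiv 0$ on $[2\log R,\infty)$, and $|\eta'|^2, |\eta''|\lesssim (\log R)^{-2}$, and $N=N(m)\geq 2/(1-m)$ is an integer to be chosen. Then $\phi\equiv 1$ on $B_R$, $\phi\equiv 0$ outside $B_{R^2}$, and for $F_\phi(t):=\int \phi\, w\, dx$ we have
\begin{equation*}
\int_{B_R} w(t)\,dx \leq F_\phi(t) \leq \int_{B_{R^2}} w(t)\,dx.
\end{equation*}
By monotonicity of $x\mapsto x^{1-m}$, it therefore suffices to show
\begin{equation*}
F_\phi(t)^{1-m}- F_\phi(s)^{1-m} \leq \frac{c_0(m)}{\ep^m(\log R)^{1-m}}\bigl(t^{1-m}-s^{1-m}\bigr).
\end{equation*}

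Differentiating $F_\phi$ using $\partial_t u=\Delta\log u$ (same for $\tilde u$) and integrating by parts twice (no boundary terms as $\phi$ has compact support) gives
\begin{equation*}
F_\phi'(t)=\int \phi \,\Delta\log(u/\tilde u)\,dx=\int \Delta\phi\cdot \log(u/\tilde u)\,dx.
\end{equation*}
Because $\phi$ is a radial function of $\log|x|$, the two-dimensional radial Laplacian identity yields the clean formula $\Delta\phi=(\eta^N)''(\log|x|)/|x|^2$, supported on the annulus $A:=B_{R^2}\setminus B_R$, and from this $|\Delta\phi|\leq C_N\eta^{N-2}/(|x|^2(\log R)^2)$. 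Next I use $\log(u/\tilde u)\geq 0$ together with the pointwise bound
\begin{equation*}
\log(u/\tilde u)\leq \tfrac{1}{m}\bigl((u/\tilde u)^m-1\bigr)\leq \frac{u^m-\tilde u^m}{m\tilde u^m}\leq \frac{w^m}{m\tilde u^m},
\end{equation*}
combining concavity of $\log$ with the subadditivity $(\tilde u+w)^m\leq \tilde u^m+w^m$ valid for $m\in(0,1)$. The lower bound \eqref{R2_lower_bd} on $\tilde u$ then yields
\begin{equation*}
F_\phi'(t)\leq \int_A |\Delta\phi|\log(u/\tilde u)\,dx\leq \frac{1}{m(\ep t)^m}\int_A |\Delta\phi|(|x|\log|x|)^{2m}\, w^m\,dx.
\end{equation*}

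To close the estimate, write $w^m=\phi^{-m}(\phi w)^m$ on $A$ and apply H\"older with exponents $\tfrac{1}{1-m}$ and $\tfrac{1}{m}$:
\begin{equation*}
F_\phi'(t)\leq \frac{1}{m(\ep t)^m} \Bigl(\int_A [|\Delta\phi|(|x|\log|x|)^{2m}\phi^{-m}]^{1/(1-m)}\,dx\Bigr)^{1-m}\, F_\phi(t)^m.
\end{equation*}
Using the bounds above, the bracketed integrand is majorised by $C_{m,N}\eta^{N-2/(1-m)}/(|x|^2(\log R)^2)$; passing to the variable $s=\log|x|$ reduces the integral to $\tfrac{2\pi}{(\log R)^2}\int_{\log R}^{2\log R} \eta(s)^{N-2/(1-m)}\,ds \leq C'_m/\log R$ precisely because $N\geq 2/(1-m)$ keeps the exponent non-negative. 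Hence $F_\phi'(t)\leq c_0(m)\ep^{-m}(\log R)^{m-1}\,t^{-m}\,F_\phi(t)^m$, which rearranges to $\tfrac{d}{dt}[F_\phi^{1-m}]\leq (1-m)c_0(m)\ep^{-m}(\log R)^{m-1}t^{-m}$. Integrating from $s$ to $t$ delivers the required inequality for $F_\phi$, and the lemma follows from the sandwich bounds above.

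The main obstacle is making the H\"older weight integral converge \emph{uniformly across all $m\in(0,1)$}: with the naive choice $N=1$, the factor $\phi^{-m/(1-m)}$ forces the integrand to blow up at the outer boundary $\{|x|=R^2\}$ fast enough to be non-integrable once $m\geq 1/2$. The remedy is to raise the cutoff to a sufficiently high power $N\geq 2/(1-m)$, so that $\phi=\eta^N$ vanishes to high enough order to absorb this boundary singularity; this is precisely where the $m$-dependence of the constant $c_0(m)$ enters. A minor point to handle is that $\Delta\phi$ is not of definite sign, but this causes no difficulty: since $\log(u/\tilde u)\geq 0$ everywhere, bounding $\int \Delta\phi\,\log(u/\tilde u)\leq \int|\Delta\phi|\log(u/\tilde u)$ loses nothing for our purposes.
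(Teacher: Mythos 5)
Your argument is correct: the cutoff $\eta(\log|x|)^N$ with $N\geq 2/(1-m)$, the inequality $\log(u/\tilde u)\leq \frac{w^m}{m\,\tilde u^m}$, the lower bound \eqref{R2_lower_bd} on the annulus $B_{R^2}\setminus B_R$, H\"older with exponents $\tfrac1{1-m},\tfrac1m$, and integration of $F_\phi'\leq c\,\ep^{-m}(\log R)^{m-1}t^{-m}F_\phi^m$ all check out, and the powers of $\log R$ come out exactly as in \eqref{initial_R2_lem_est}. This is essentially the same approach as the paper, whose proof is simply a citation of \cite[Theorem 2.1]{ADE1997} (cf.\ \cite[Theorem 4.4.1]{giesen_thesis}); your writeup is a self-contained reconstruction of that cutoff/H\"older/ODE argument, simplified exactly as the paper notes by $u\geq\tilde u$ and the uniform-in-$t$ lower bound.
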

The proof of this lemma is almost the same as that of Theorem 2.1 of \cite{ADE1997}, except that we have assumed that \eqref{R2_lower_bd} holds uniformly on $(0,\tau]$ so that the inequality \eqref{initial_R2_lem_est} is valid for arbitrarily small $s$. Moreover, since we have assumed $u\geq \tilde{u}$, we do not need to take the positive part as in \cite{ADE1997}.

To verify \eqref{R2_lower_bd} for $t\in (0,\tau]$, we apply Lemma 4.4 of \cite{TY3} to $\tilde{g}(\tau)$ to obtain
\[
	\tilde{u}(x,\tau)\geq \frac{\eta}{(\abs{x}\log \abs{x})^2}
\]
on $\Real^2\setminus B_2$ for some $\eta$ depending on $\tilde{u}$ and $\tau$. We then appeal to the monotonicity of $t\mapsto \tilde{u}(x,t)/t$ in $t$ (see Remark \ref{uovert_mono}) to deduce that
\[
	\tilde{u}(x,t)\geq \frac{\varepsilon t}{(\abs{x}\log \abs{x})^2}
\]
for all $t\in (0,\tau]$ and $\varepsilon=\frac{\eta}{\tau}$.

Finally, we apply Lemma \ref{initial_R2_lem} to see that $u(t)=\tilde{u}(t)$ for any $t\in (0,\tau]$, by taking $s\downto 0$ in \eqref{initial_R2_lem_est} (see \eqref{eqn:initial}) and then $R\to \infty$. This concludes the proof of Theorem \ref{main_exist_thm}.

\subsection{Proof of the ordering theorem \ref{ordered_thm}}
\label{ordered_sect}

In this section we prove Theorem \ref{ordered_thm}. Our first observation is that given the uniqueness result in Theorem \ref{main_exist_thm}, we may as well assume that $M$ is simply connected. If otherwise, we may consider the lift to the universal cover $\tilde{M}$ and the uniqueness implies that the lift of $g(t)$ is {\bf the} unique Ricci flow starting from the lift of $\mu$. If we know it is larger than the lift of $\tilde{g}$, then $g$ is larger than $\tilde{g}$ as we desire.


Secondly, we may assume that $\tilde{g}$ is complete. If $M$ is compact, then this is automatic. If $M$ is the disc or the plane, we apply Theorem \ref{DC_max_stretch_thm} to get a complete Ricci flow $\tilde{g}_{max}(t)$ with $\nu$ as the initial data and satisfying
\[
	\tilde{g}_{max}(t) \geq \tilde{g}
\]
as long as they both exist. 
Note that $\nu$ is forced to be nonatomic, as required by Theorem \ref{DC_max_stretch_thm}, because $\nu\leq \mu$. 
By replacing $\ti g(t)$ with $\tilde{g}_{max}(t)$, we may assume that $\ti g(t)$ is complete as desired.

Recall that for the proof of existence in \cite{TY3}, we constructed a sequence of smooth initial metrics $g_i$ converging to the Radon measure $\mu$ in some sense (see Lemma 4.1 of \cite{TY3}). If $g_i(t)$ is the complete Ricci flow from $g_i$ (Theorem 1.6 of \cite{TY3}), we proved that they converge 
to a complete Ricci flow starting from $\mu$. A priori, this limit flow may depend on the choice of approximations.

However, as a corollary of the uniqueness result in Theorem \ref{main_exist_thm}, we may now take any approximating sequence $g_i$ and the argument in \cite{TY3} gives the unique complete Ricci flow solution starting with the given initial measure. For the purpose of proving Theorem \ref{ordered_thm}, it therefore suffices to find two sequences of approximating smooth initial metrics for $\mu$ and $\nu$ respectively such that the order is preserved, because their limits, which by the uniqueness result are $g(t)$ and $\tilde{g}(t)$, will be ordered.

The following mollification result is similar to \cite[Lemma 4.1]{TY3}. It is slightly simpler since we can now work on the universal cover. On the other hand we claim a little more in the sense that we need to compare the smoothings of two measures that are known to be ordered.
\begin{lem}[Smoothing lemma]
Suppose the Riemann surface $M$ is either the disc, the plane or the sphere.
We can find a map from the space of Radon measures $\mu$ on $M$ and numbers $h>0$
to the space of conformal Riemannian metrics $g_h(\mu)$ on $M$ of finite total volume, 
with the properties that for every fixed Radon measure $\nu$, we have
$$\mu_{g_h(\nu)}\weakto \nu\qquad\text{ and }\qquad
\mu_{g_h(\nu)}(M)\to \nu(M)$$
as $h\downto 0$, and so that for all Radon measures $\nu_1$ and $\nu_2$ on $M$ with 
$\nu_1\leq \nu_2$, and for every $h>0$, we have
$$g_h(\nu_1)\leq g_h(\nu_2).$$
\end{lem}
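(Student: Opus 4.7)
The plan is to define $g_h(\mu)=u_h^\mu g_0$, where $g_0$ is a fixed background metric (the flat metric on $D$ or $\R^2$, the round metric on $S^2$) and the conformal factor $u_h^\mu$ is built linearly from $\mu$ via a smooth positive kernel that does not depend on $\mu$. Linearity and positivity will make the ordering property $\nu_1\leq\nu_2\Rightarrow u_h^{\nu_1}\leq u_h^{\nu_2}$ automatic, which is the key new feature beyond the smoothing in \cite[Lemma 4.1]{TY3}.

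Concretely, fix a nonnegative radial mollifier $\rho\in C_c^\infty(\R^2)$ with $\int_{\R^2}\rho\,dx=1$ and, for $M=\R^2$ or $M=D$, set $\rho_h(x,y):=h^{-2}\rho\bigl((x-y)/h\bigr)$ restricted to $x,y\in M$. For $M=S^2$, take $\rho_h(x,y)$ to be a smooth symmetric positive kernel of heat-kernel type satisfying $\int_{S^2}\rho_h(x,y)\,d\mathrm{Vol}_{g_0}(x)\leq 1$ and approximating a Dirac mass at $x$ as $h\downto 0$. Choose a smooth cutoff $\chi_h\in C_c^\infty(M)$, independent of $\mu$, with $0\leq\chi_h\leq 1$ and $\chi_h(x)\to 1$ pointwise as $h\downto 0$ (take $\chi_h\equiv 1$ on $S^2$); and a smooth positive $\eta_h\in L^1(M)\cap C^\infty(M)$, again independent of $\mu$, with $\|\eta_h\|_{L^1}\to 0$. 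Set
\beq
u_h^\mu(x):=\chi_h(x)\int_M\rho_h(x,y)\,d\mu(y)+\eta_h(x),
\eeq
and $g_h(\mu):=u_h^\mu g_0$. This is smooth and strictly positive (the latter thanks to $\eta_h$), has finite total volume for each $h$ (thanks to the compact support of $\chi_h$ and the fact that $\rho_h\ast\mu$ is locally bounded), and is monotone in $\mu$ (since $\rho_h,\chi_h\geq 0$ and $\chi_h,\eta_h$ are independent of $\mu$).

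Weak convergence $\mu_{g_h(\nu)}\weakto \nu$ follows from Fubini: for $\psi\in C_c^0(M)$,
\beq
\int_M\psi\,d\mu_{g_h(\nu)}=\int_M\biggl(\int_M\psi(x)\chi_h(x)\rho_h(x,y)\,d\mathrm{Vol}_{g_0}(x)\biggr)d\nu(y)+\int_M\psi\,\eta_h\,d\mathrm{Vol}_{g_0}.
\eeq
For $h$ small enough that $\chi_h\equiv 1$ on $\mathrm{supp}\,\psi$, the inner integral is the standard mollification of $\psi$ at $y$, which tends to $\psi(y)$ uniformly on a compact neighbourhood of $\mathrm{supp}\,\psi$ and vanishes outside such a neighbourhood for small $h$; so the first term tends to $\int_M\psi\,d\nu$, and the second term is $o(1)$. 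For the total-mass statement, the same Fubini applied formally with $\psi\equiv 1$ gives
\beq
\mu_{g_h(\nu)}(M)=\int_M f_h(y)\,d\nu(y)+o(1),\qquad f_h(y):=\int_M\chi_h(x)\rho_h(x,y)\,d\mathrm{Vol}_{g_0}(x),
\eeq
with $0\leq f_h\leq 1$ and $f_h\to 1$ pointwise. Fatou's lemma then gives $\liminf_{h\downto 0}\int_M f_h\,d\nu\geq\nu(M)$, which combined with $\int_M f_h\,d\nu\leq\nu(M)$ forces $\mu_{g_h(\nu)}(M)\to\nu(M)$, covering $\nu(M)<\infty$ and $\nu(M)=\infty$ uniformly.

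The main obstacle is reconciling three competing constraints: finite total volume for each fixed $h$ (which forces a cutoff when $\nu(M)=\infty$), monotonicity in $\nu$ (which forbids the cutoff from depending on $\nu$), and total-mass convergence in the limit (which requires the cutoff to be asymptotically ineffective). Choosing a single cutoff $\chi_h$ once and for all, independent of $\nu$, and invoking Fatou to recover the correct mass in the limit, threads this needle. The additive $\eta_h$ ensures strict positivity without disturbing either the ordering or the two limits.
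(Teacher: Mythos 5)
Your construction is correct and essentially coincides with the paper's proof: the paper likewise mollifies a $\nu$-independent truncation of the measure (restricting $\nu$ to $B_{1/h}$ before mollifying, rather than multiplying by $\chi_h$ afterwards) and adds a small $\nu$-independent positive smooth metric of finite volume ($h$ times a pulled-back round-sphere metric, playing the role of your $\eta_h$), with the ordering coming exactly as you say from positivity and linearity of the kernel together with the fact that the truncation does not depend on $\nu$. The only point to tighten is that the stated hypothesis ``$\chi_h\to 1$ pointwise'' should be strengthened to a concrete choice such as $\chi_h\equiv 1$ on $B_{1/h}$, since your argument actually uses that $\chi_h\equiv 1$ on each compact set for all small $h$ (both for the step ``$\chi_h\equiv 1$ on $\mathrm{supp}\,\psi$'' and for $f_h\to 1$ pointwise).
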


\begin{proof}
Consider first the case that $M$ is the plane. 
Let $g_0$ be a metric on $M$ corresponding to the round punctured unit sphere, pulled back by stereographic projection.
Given a Radon measure $\nu$ on the plane, and $h>0$, we can restrict $\nu$ to the ball $B_{1/h}$
and then mollify $\nu$ in the traditional way over the whole plane (with respect to the scale parameter $h$)
to give a smooth conformal factor of a degenerate Riemannian metric on $\R^2$, with compact support. If we add on $hg_0$ then we obtain a nondegenerate Riemannian metric $g_{h}(\nu)$ with the required properties.

The case that $M$ is the disc can be handled in a similar manner. Given a Radon measure $\nu$ on $D$ and $h\in (0,\half)$,
we restrict to $B_{1-2h}$, mollify and add $hg_1$, where $g_1$ is (say) the Lebesgue measure of the unit disc.



The case that $M=S^2$ will follow instantly from mollification alone, although we should use mollification with respect to the distance of the round spherical metric.
\end{proof}

\appendix

\section{The Poisson equation}

It will be important to be able to solve Poisson's equation with smooth, but otherwise uncontrolled, inhomogeneous term $f$.
\begin{lem}
	\label{lem:poisson}For any smooth function $f$ on the unit disc $D$ or the plane $\Real^2$, there exists a smooth solution $u$ to the Poisson equation
\begin{equation}
	\Delta u =f.
\end{equation}
\end{lem}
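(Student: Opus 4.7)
The plan is to mimic a standard exhaustion-and-approximation argument of Runge type, using that both $D$ and $\R^2$ are simply connected.

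Write $\Omega$ for either $D$ or $\R^2$ and fix an exhaustion by concentric open balls $B_{r_1}\subset\subset B_{r_2}\subset\subset\cdots$ with $r_n\upto 1$ in the disc case and $r_n\upto\infty$ in the plane case. Choose cutoffs $\chi_n\in C_c^\infty(B_{r_{n+1}})$ equal to $1$ on $\overline{B_{r_n}}$. Let $\Phi(x)=\frac{1}{2\pi}\log|x|$ be the two-dimensional Newtonian potential and set $v_n:=\Phi*(\chi_n f)\in C^\infty(\R^2)$; then $\lap v_n=\chi_n f$, and in particular $\lap v_n=f$ on $B_{r_n}$.

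The idea is to telescope the $v_n$ into a single smooth solution. The difference $w_n:=v_{n+1}-v_n$ is harmonic on $B_{r_n}$, and since $B_{r_n}$ is simply connected, $w_n$ is the real part of a holomorphic function whose Taylor series about the origin converges uniformly on $\overline{B_{r_{n-1}}}$. Thus I can pick a harmonic polynomial $p_n$ with
$$\sup_{\overline{B_{r_{n-1}}}}|w_n-p_n|<2^{-n},$$
and define $u_N:=v_N-\sum_{n=1}^{N-1}p_n$. By construction $\lap u_N=f$ on $B_{r_N}$, while the increment $u_{N+1}-u_N=w_N-p_N$ is harmonic on $B_{r_N}$ with $L^\infty(\overline{B_{r_{N-1}}})$-norm at most $2^{-N}$.

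Interior estimates for harmonic functions upgrade this uniform smallness to smallness in every $C^k$-norm on any fixed $\overline{B_{r_m}}$ once $N$ is large enough, so $(u_N)$ is Cauchy in $C^\infty_{loc}(\Omega)$, with smooth limit $u$ satisfying $\lap u=f$ on every $B_{r_m}$ and hence throughout $\Omega$. The only step that is not pure bookkeeping is the harmonic polynomial approximation of $w_n$ on $\overline{B_{r_{n-1}}}$, but simple connectedness reduces it to truncating the Taylor series of a holomorphic extension; this is the place where the result would fail for more general planar domains.
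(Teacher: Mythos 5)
Your proof is correct and is essentially the paper's argument (adapted from Ahlfors): exhaust the domain, solve locally via convolution with the Newtonian potential, and correct the harmonic discrepancies by truncating Taylor series into harmonic polynomials so that the resulting series/telescope converges locally uniformly to a global solution. The only cosmetic difference is that you telescope the cutoff approximations $\chi_n f$ with smooth cutoffs, whereas the paper sums solutions for the sharp annular pieces $f\chi_{B_i\setminus B_{i-1}}$; your variant has the minor advantage that each $v_n$ is classically smooth, so no appeal to elliptic regularity is needed at the end.
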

\begin{proof}
	We give a proof for the $\Real^2$ case only. The other case can be proved by similar arguments.

	For any subset $E$ of the domain, denote the characteristic function of $E$ by $\chi_E$. Let $f_1=f\chi_{B_1}$ and
	\[
		f_i= f \cdot \chi_{B_i\setminus B_{i-1}}
	\]
	for $i>1$. Obviously, $f=\sum_{i=1}^\infty f_i$, and 
 for each $i\in \N$ we have a solution $u_i$ (given by a convolution for example) to the equation
	\[
		\Delta u_i=f_i \qquad \text{on} \quad \Real^2.
	\]
	By the definition of $f_i$, we know $u_i$ is harmonic on the disc $B_{i-\frac{5}{4}}$ for $i>1$. Hence, it is the uniform limit of some converging power series there. Moreover, the partial sums are harmonic polynomials. By subtracting from $u_i$ the partial sum of sufficiently many terms, we may get another solution $v_i$ to the Poisson equation $\Delta v_i=f_i$ on $\R^2$, satisfying
	\begin{equation}
		\label{eqn:goodvi}
		\norm{v_i}_{C^0(B_{i-\frac{3}{2}})}\leq 2^{-i}.
	\end{equation}
	Set $v_1=u_1$. By \eqref{eqn:goodvi}, the series $\sum_{i=1}^\infty v_i$ converges to some function $v$ uniformly on any compact set of $\Real^2$ and gives a classical solution to $\Delta v= f$.
\end{proof}
The idea of the proof comes from Theorem 4 in Chapter 5 of Ahlfors' book \cite{Ahlfors}.

\section{A semi-linear equation}


In this section, we study the equation
\begin{equation}\label{eqn:mykw}
	\Delta_0 w = e^{w- f} -1
\end{equation}
on $S^2$, where $\Delta_0$ stands for the Laplacian of the round metric $g_0$ on $S^2$ and $f$ is some given function. It is closely related to the problem of prescribing Gauss curvature on surfaces.  


\begin{lem}
	\label{lem:kw}
For each smooth $f:S^2\to (-\infty,0]$,
there exists a solution $w\in C^\infty(S^2)$ 
%
to \eqref{eqn:mykw} such that for any $p\in (1,\infty)$
we have 
\[
	\norm{w}_{C^0(S^2)}\leq C
\]
for some constant $C$ depending only on $p$ and an upper bound for $\norm{e^{-f}}_{L^p}$.
\end{lem}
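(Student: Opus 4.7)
My plan is to obtain existence of a smooth solution by direct minimization, then to show via a test-function argument that any solution is automatically non-positive, which reduces the $C^0$ estimate to standard linear elliptic regularity.

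For \emph{existence}, I would consider the functional
\[
E(w) = \frac{1}{2}\int_{S^2}|\nabla w|^2\, dV_0 + \int_{S^2}\bigl(e^{w-f} - 1 - w\bigr)\, dV_0
\]
on $H^1(S^2)$. Because $f\le 0$ we have $e^{w-f}\ge e^w\ge 1+w$, so the second integrand is pointwise nonneg and $E\ge 0$. Writing $w = \tilde w + \bar w$ with $\int\tilde w = 0$, the Poincar\'e inequality bounds $\|\tilde w\|_{L^2}$ by the Dirichlet term, and Jensen's inequality $\int_{S^2} e^w\ge 4\pi e^{\bar w}$ together with convexity gives
\[
\int_{S^2}(e^{w-f} - 1 - w)\, dV_0 \ge 4\pi\bigl(e^{\bar w} - 1 - \bar w\bigr),
\]
which is proper in $\bar w$; thus $E$ is coercive on $H^1$. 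Lower semicontinuity is standard (convexity of the Dirichlet part, Fatou on the nonlinear part; the nonlinear integrand is finite for $w\in H^1$ by Moser--Trudinger combined with smoothness of $f$). A minimizer satisfies the Euler--Lagrange equation weakly, and standard elliptic bootstrap makes it smooth.

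Next, I would show that \emph{any} smooth solution satisfies $w\le 0$. Multiplying \eqref{eqn:mykw} by $w^+:=\max(w,0)\in H^1$ and integrating,
\[
-\int_{S^2}|\nabla w^+|^2\, dV_0 \;=\; \int_{S^2} w^+\bigl(e^{w-f} - 1\bigr)\, dV_0.
\]
On $\{w>0\}$ we have $w - f \geq w > 0$, hence $e^{w-f}>1$, so the right-hand side is nonneg; both sides therefore vanish, so $w^+$ is constant. A positive constant is incompatible with \eqref{eqn:mykw} (it would force $f\equiv w>0$), so $w^+\equiv 0$. This is the crucial place where the hypothesis $f\le 0$ enters.

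Finally, the \emph{quantitative $C^0$ bound}. Integrating the equation over $S^2$ yields $\int_{S^2} e^{w-f}\, dV_0 = 4\pi$. Using $w\le 0$, we have $e^{w-f}\le e^{-f}$ pointwise, so
\[
\|e^{w-f} - 1\|_{L^p(S^2)} \le \|e^{-f}\|_{L^p(S^2)} + (4\pi)^{1/p}.
\]
Applying the standard $L^p$ elliptic estimate to $w - \bar w$ (which solves the same equation) and the Sobolev embedding $W^{2,p}(S^2)\hookrightarrow C^0(S^2)$, valid for every $p>1$ in two dimensions, gives
\[
\|w - \bar w\|_{C^0(S^2)} \le C(p)\,\|e^{-f}\|_{L^p(S^2)} =: M.
\]
Since $w\le 0$ we have $\bar w\le 0$; for the lower bound on $\bar w$, use
\[
4\pi = \int_{S^2} e^{w-f}\, dV_0 \le e^{\bar w + M}\|e^{-f}\|_{L^1(S^2)},
\]
and $\|e^{-f}\|_{L^1}\le (4\pi)^{1/q}\|e^{-f}\|_{L^p}$ by H\"older to conclude $\bar w\ge -M - C\log\|e^{-f}\|_{L^p} - C'$. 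Combining, $\|w\|_{C^0}\le |\bar w| + M$ is bounded in terms of $p$ and an upper bound for $\|e^{-f}\|_{L^p}$. The only step I expect to require a moment of care is the sign argument in the $w^+$ identity, which is the nonlinear input that makes the subsequent linear estimate possible.
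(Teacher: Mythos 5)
Your proof is correct, but it takes a genuinely different route from the paper. The paper follows Kazdan--Warner: it quotes their sub/supersolution lemma (monotone iteration for $\Delta_0 u = c - he^u$ with $c<0$) and constructs explicit barriers $u_\pm$ by solving two auxiliary linear Poisson problems, the $C^0$ bounds on the barriers coming from the same $L^p\to W^{2,p}\hookrightarrow C^0$ estimate you use; the hypothesis $f\le 0$ enters there as $h\le -1$, which simplifies the barrier construction and guarantees the ordering $u_- - 2C\le u_+$. You instead obtain existence by direct minimization of the natural convex-type energy (coercivity via Jensen plus $e^{w-f}\ge e^w\ge 1+w$, differentiability via Moser--Trudinger, smoothness by bootstrap), and you get the quantitative bound a posteriori: the $w^+$ test-function argument gives $w\le 0$ (this is where $f\le 0$ enters for you), which makes the right-hand side $e^{w-f}-1$ controlled in $L^p$ by $e^{-f}$, and then Calder\'on--Zygmund plus $W^{2,p}(S^2)\hookrightarrow C^0$ bounds $w-\bar w$, with $\bar w$ pinned down from $\int_{S^2}e^{w-f}\,d\mu_{g_0}=4\pi$ and H\"older. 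Your approach buys a self-contained argument (no appeal to the quoted Kazdan--Warner iteration lemma) and in fact yields the $C^0$ bound for \emph{every} smooth solution, not just the constructed one, which is more than the lemma asks; the paper's approach buys existence and the bound simultaneously from the ordered barriers with essentially no nonlinear analysis beyond the quoted lemma. Two cosmetic points to tidy when writing it up: absorb the additive constant in $\|e^{w-f}-1\|_{L^p}\le\|e^{-f}\|_{L^p}+(4\pi)^{1/p}$ using $e^{-f}\ge 1$ (so $\|e^{-f}\|_{L^p}\ge(4\pi)^{1/p}$), and note that for mean-zero functions the estimate $\|u\|_{W^{2,p}}\le C\|\Delta_0 u\|_{L^p}$ (or directly the Green's function bound) is what justifies dropping the lower-order term.
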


The proof below is adapted from Section 9 and 10 in \cite{KW}. It was shown there that we can use the method of upper and lower solutions to solve
\begin{equation}
	\label{eqn:kw}
	\Delta_0 u = c- he^u
\end{equation}
where $c$ is a constant and $h$ is a smooth function. Notice when $c=-1$ and $h=-e^{-f}$, \eqref{eqn:kw} becomes \eqref{eqn:mykw}.

\cmt{Kazdan and Warner attribute this to Courant and Hilbert vol 2, p. 370-371 (which I don't have here)}

\begin{lem}[Lemma 9.3 in \cite{KW}]
	Let $c<0$ be a constant and $h\in C^\infty(S^2)$ be a smooth function. If there exist smooth upper and lower solutions $u_+$ and $u_-$ in the sense that
 \begin{equation}
 \label{eqn:upperlower}
		\Delta_0 u_- -c+he^{u_-}\geq 0; \quad \Delta_0 u_+ -c +he^{u_+}\leq 0
 \end{equation}
	and if $u_-\leq u_+$, then there is a smooth solution $u$ to \eqref{eqn:kw} satisfying $u_-\leq u\leq u_+$.
\end{lem}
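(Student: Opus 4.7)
The plan is to use classical monotone iteration between the given sub- and super-solutions. First I would shift the equation by a large linear term. Fix $K > 0$ large enough that the function $\Phi(u) := c - he^u - Ku$ is nonincreasing in $u$ on the range $[\min_{S^2} u_-, \max_{S^2} u_+]$; since $\Phi'(u) = -he^u - K$, any $K > \sup_{S^2}\abs{h}\cdot \exp(\max_{S^2} u_+)$ will do. Equation \eqref{eqn:kw} then becomes $(\Delta_0 - K) u = \Phi(u)$. The operator $\Delta_0 - K$ is invertible on $C^\infty(S^2)$ (its $L^2$-kernel vanishes by integration by parts, and Fredholm theory closes the argument) and satisfies a strong maximum principle: any $w \in C^2(S^2)$ with $(\Delta_0 - K)w \geq 0$ must have $w \leq 0$, since at a positive maximum $\Delta_0 w \leq 0$ while $-Kw < 0$.

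Define the iteration $u_0 := u_-$ and $u_{n+1} := (\Delta_0 - K)^{-1}\Phi(u_n)$. Rewriting the subsolution hypothesis as $(\Delta_0 - K)u_- \geq \Phi(u_-)$ and the supersolution hypothesis as $(\Delta_0 - K)u_+ \leq \Phi(u_+)$, the maximum principle together with the monotonicity of $\Phi$ give by induction on $n$ that
\[
u_- = u_0 \leq u_1 \leq u_2 \leq \cdots \leq u_+,
\]
the inductive step reducing to the identity
\[
(\Delta_0 - K)(u_{n+1} - u_n) = \Phi(u_n) - \Phi(u_{n-1}) \leq 0
\]
and the estimate $(\Delta_0 - K)(u_+ - u_{n+1}) \leq \Phi(u_+) - \Phi(u_n) \leq 0$, with the base case handled analogously directly from the sub- and super-solution inequalities. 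Hence $(u_n)$ is pointwise nondecreasing and uniformly bounded above by $u_+$, so it converges pointwise to some function $u$ with $u_- \leq u \leq u_+$.

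To finish I would upgrade pointwise convergence to smooth convergence via standard elliptic bootstrapping. Since each $u_n$ is sandwiched between $u_-$ and $u_+$, the inhomogeneity $\Phi(u_n)$ is uniformly bounded in $C^0(S^2)$; applying $W^{2,p}$-estimates to $(\Delta_0 - K)u_{n+1} = \Phi(u_n)$ yields uniform $W^{2,p}$ bounds for every $p < \infty$, hence uniform $C^{1,\alpha}$ bounds by Sobolev embedding. Iterating Schauder estimates gives uniform $C^{k,\alpha}$ bounds on $(u_n)$ for every $k$, and Arzel\`a-Ascoli extracts a $C^\infty$-convergent subsequence whose limit must coincide with the pointwise limit $u$. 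Thus $u \in C^\infty(S^2)$, and passing to the limit in the iteration equation yields the desired smooth solution $\Delta_0 u = c - he^u$ with $u_- \leq u \leq u_+$. The only real choice in the argument is the size of $K$; compactness of $S^2$ makes all relevant bounds uniform and the induction essentially mechanical, so there is no serious obstacle.
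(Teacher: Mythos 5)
Your monotone-iteration argument is correct: the choice of $K$ makes $\Phi(u)=c-he^{u}-Ku$ nonincreasing on the relevant range, the maximum principle for $\Delta_0-K$ on the compact surface $S^2$ gives the ordered, increasing sequence $u_-\leq u_n\leq u_+$, and elliptic bootstrapping upgrades the pointwise limit to a smooth solution. The paper itself offers no proof — it quotes Lemma 9.3 of Kazdan--Warner, who attribute it to Courant--Hilbert — and the classical proof behind that citation is precisely the sub/supersolution iteration you carried out, so your write-up simply supplies the standard argument.
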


Lemma \ref{lem:kw} follows from the above lemma if we can construct some upper and lower solutions  that  are bounded by a constant depending only on $p$ and 
an upper bound for
the $L^p$ norm of $h$. The following construction is from \cite{KW}. However, it is simplified since 
we have assumed that $f\leq 0$, i.e. $h\leq -1$.

\begin{lem}[From Lemma 9.5 and Theorem 10.5 (a) in \cite{KW}]
For the case $c=-1$ and $h\leq -1$, we can construct smooth functions $u_{+}$ and $u_-$ satisfying \eqref{eqn:upperlower} and
\[
	-C\leq u_-\leq u_+\leq C,
\]
for some $C$ depending only on $p$ and an upper bound for $\norm{h}_{L^p}$.
\end{lem}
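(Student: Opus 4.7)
Write $H := -h \geq 1$, so that \eqref{eqn:upperlower} (with $c = -1$) becomes $\Delta_0 u_+ + 1 \leq He^{u_+}$ and $\Delta_0 u_- + 1 \geq He^{u_-}$. I would handle the upper solution immediately by taking $u_+ \equiv 0$: the desired inequality $\Delta_0 u_+ + 1 = 1 \leq H = He^{u_+}$ is then a direct consequence of $H \geq 1$, and $\|u_+\|_\infty = 0$ satisfies the upper half of the $L^\infty$ bound trivially.

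For the lower solution, my plan is to use the ansatz
\[
u_- := -\log \bar H + \phi - A,
\qquad
\bar H := \frac{1}{4\pi}\int_{S^2} H\, dA_{g_0},
\]
where $\phi \in C^\infty(S^2)$ is the unique mean-zero solution of the linear Poisson equation $\Delta_0 \phi = H/\bar H - 1$ (which is solvable because its right-hand side has mean zero), and $A := \|\phi\|_\infty$. To control $\|\phi\|_\infty$ I would apply elliptic $L^p$ estimates on $S^2$ combined with the Sobolev embedding $W^{2,p}(S^2)\hookrightarrow C^0(S^2)$, valid for $p > 1$, obtaining
\[
\|\phi\|_\infty \leq C(p)\|H/\bar H - 1\|_{L^p} \leq C(p)\|H\|_{L^p},
\]
where the last step uses $\bar H \geq 1$. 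The subsolution inequality will then follow by a direct computation: since $\Delta_0 u_- = H/\bar H - 1$ and $He^{u_-} = (H/\bar H)e^{\phi - A}$,
\[
\Delta_0 u_- + 1 - He^{u_-} = \frac{H}{\bar H}\bigl(1 - e^{\phi - A}\bigr) \geq 0,
\]
using $\phi \leq A$. The ordering $u_- \leq -\log \bar H \leq 0 = u_+$ is then obvious, and the uniform estimate $\|u_-\|_\infty \leq \log \bar H + 2\|\phi\|_\infty \leq C(p,\|H\|_{L^p})$ follows from H\"older's inequality ($\bar H \leq C(p)\|H\|_{L^p}$ on $S^2$) together with the bound on $\phi$.

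The main conceptual obstacle is choosing the correct normalisation inside the ansatz. The more natural-looking choice $\Delta_0 \phi = H - \bar H$, with $u_- = -\phi - A$, fails exactly when $\bar H$ is large but $H$ is only moderate at some point: there the nonlinear term $He^{u_-}$ is not dominated by the Laplacian gain from $\phi$. Introducing the additive shift $-\log \bar H$ builds a factor of $1/\bar H$ into the nonlinear term, rescaling it to have spatial average of order one; the Poisson correction $\phi$, calibrated now to the \emph{relative} fluctuation $H/\bar H - 1$ rather than to $H - \bar H$, is then exactly the right magnitude for the pointwise inequality to close with room to spare, and the resulting $u_-$ is bounded in $L^\infty$ in terms only of $p$ and $\|h\|_{L^p}$, as required.
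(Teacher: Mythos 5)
Your proof is correct. Your lower solution is in fact the same as the paper's, just written more transparently: the paper solves $\Delta_0 w=\alpha(-h)-1$ with $\alpha=(-\bar h)^{-1}$ and $\bar w=0$, then sets $u_-=w-\norm{w}_{C^0}+\log\alpha$; since $\alpha(-h)-1=H/\bar H-1$ and $\log\alpha=-\log\bar H$, this is exactly your $u_-=\phi-\norm{\phi}_{C^0}-\log\bar H$, with the same elliptic $L^p$ estimate plus Sobolev embedding supplying the $C^0$ bound on $\phi$. Where you genuinely diverge is the upper solution: the paper solves a second Poisson equation $\Delta_0 v=\bar h-h$ and takes $u_+=v+\norm{v}_{C^0}$, using only $\bar h\le -1$ at that point, whereas you take $u_+\equiv 0$, which works precisely because $h\le -1$ pointwise (an assumption the lemma grants you). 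This buys two small simplifications: there is no second Poisson equation to solve and estimate, and the ordering $u_-\le -\log\bar H\le 0=u_+$ is automatic, so you do not need the paper's closing observation that $u_--2C$ is again a lower solution (valid since $h<0$) in order to restore the ordering between the two barriers. The only trade-off is that your upper barrier exploits the pointwise hypothesis $h\le -1$ rather than just $\bar h\le -1$, so it is marginally less robust than the Kazdan--Warner-style construction, but within the stated hypotheses nothing is lost and the verification of the subsolution inequality, the $L^\infty$ bounds via H\"older and $\bar H\ge 1$, and the dependence of $C$ only on $p$ and $\norm{h}_{L^p}$ all check out.
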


\begin{proof}
	Let $\bar{h}$ be the average of $h$ with respect to $g_0$, which by our assumption is no larger than $-1$. In the following proof, the constant $C$ varies from line to line and depends only on 
 $p$ and an upper bound for  $\norm{h}_{L^p}$. Solve
	\[
		\Delta_0 v = \bar{h}-h,
	\]
	with the normalization that the average of $v$ vanishes, i.e. $\bar{v}=0$. Hence, the usual $L^p$ estimate implies that
	\[
		\norm{v}_{C^0(S^2)}\leq C.
	\]
	Set $u_+=v+\norm{v}_{C^0}$ and we find that
\[
	\Delta_0 u_+ -c + he^{u_+} = \bar{h}-h + 1 + h e^{v+ \norm{v}_{C^0}}\leq 0.
\]
By setting $\alpha= (- \bar{h})^{-1}$, we know 
$\al\geq 1/C$.
Solve the Poisson equation
\[
	\Delta_0 w = \alpha (-h) -1
\]
with 
$\bar{w}=0$.
Since $\alpha\leq 1$, we have $\norm{w}_{C^0}\leq C$.
Finally, we take 
$$\lambda:=\norm{w}_{C^0}- \log \alpha\leq C$$ 
and set
\[
	u_-=w-\lambda.
\]
We check that
\[
	\Delta_0 u_- - c + he^{u_-}= \alpha (-h) -1 + 1 + h e^{w -\norm{w}_{C^0}+ \log \alpha}\geq 0.
\]
So far we have constructed an upper solution $u_+$ and a lower solution $u_-$, however, it is not clear if $u_-\leq u_+$. Since $u_+$ and $u_-$ are both bounded by $C$, we have
\[
	u_- -2C \leq u_+.
\]
The key observation is that since $h<0$, $u_--2C$ is also a lower solution.
\end{proof}

\section{Uniqueness on the 2-sphere}
\label{sec:s2}

In this section, we give a self-contained proof of the uniqueness part of Theorem \ref{main_exist_thm} in the case that the universal cover $\tilde{M}$ is $S^2$. 
Some aspects of the proof are easier than in the proof of Theorem \ref{main_exist_thm} owing to 
the compactness of $S^2$. 
In particular, it is straightforward to directly construct a potential corresponding to any Ricci flow.
The arguments used here are adapted from \cite{DL}.

The proof consists of two parts. The first part proves the existence of the potential flow and translates the uniqueness problem into that of the potential flow. The second part uses various maximum principles to prove the uniqueness.

By passing to the universal cover, we may assume that $M=S^2$.
Recall that if $g(t)$ is a conformal Ricci flow solution on $S^2$ then the area decreases linearly because
\beq
\label{8pi_dec}
\frac{d}{dt}\mu_{g(t)}(S^2)=-2\int_{S^2}K_{g(t)}d\mu_{g(t)}=-8\pi
\eeq
by Gauss-Bonnet \cite[(2.5.8)]{RFnotes}.
Suppose in addition that  $g(t)$ attains initial data $\mu$ in the sense that
\[
	\int_{S^2} \eta d\mu = \lim_{t\downto 0} \int_{S^2} \eta d\mu_{g(t)}
\]
for any continuous function $\eta$ on $S^2$. 
By setting $\eta\equiv 1$, we thus deduce that $\mu_{g(t)}(S^2)=\mu(S^2)-8\pi t$. In particular, in the case that $\mu$ is the trivial measure then not only is the existence trivial (because $T=0$) but also, no other solution can exist, giving uniqueness.
We may therefore assume that $\mu$ is not trivial, and $T>0$.
By scaling, we may also assume that $\mu(S^2)=4\pi$.

The existence of at least one solution for the required time is known from \cite{TY3}, cf. \cite{GZ}.

Recall that $g_0$ is the round metric of curvature $1$ and $\Delta_0$ is the Laplacian of $g_0$. If we let $u(t)$ be the conformal factor of $g(t)$ with respect to $g_0$, then the Ricci flow equation 
\eqref{RF_eq}
becomes
\beq
\label{uS2_eq}
	\partial_t u = \Delta_0 \log u -2.
\eeq
Since $g(t)$  attains the measure $\mu$ as initial data, we have the following estimates for $u(t)$:
\begin{enumerate}[(i)]
\item
As justified above, 
the area of $g(t)$ is given by 
\begin{equation}
	\label{eqn:area}
	\int_{S^2}u(t) d\mu_{g_0} =4\pi -8\pi t.
\end{equation}
\item
For  $\ep:=\min_{S^2} \frac{u(1/4)}{1/4}$, we have
\begin{equation}
		\label{eqn:uct}
	u(x, t)\geq \ep t,\qquad \text{for all} \quad (x,t)\in S^2\times (0,1/4].
\end{equation}
This follows from the monotonicity of $\frac{u}{t}$ 
given in Remark \ref{uovert_mono}.
\end{enumerate}


\subsection{The flow of the potential}

The main result of this subsection is the existence of a potential flow.
\begin{lem}
	\label{lem:potentialflow}
	Given $\mu$, $g(t)$ and $u(t)$ as above, there exists a smooth function $\varphi:S^2\times (0,1/2)\to \Real$ such that

\begin{enumerate}[(1)]
\item
	\begin{equation}
		\label{eqn:uphi}
		\Delta_0 \varphi = u - (1-2t).
	\end{equation}
\item
	\begin{equation}
		\label{eqn:phievolve}
		\partial_t \varphi = \log (\Delta_0 \varphi +1-2t).
	\end{equation}
\item
The pointwise limit $\lim_{t\downto 0} \varphi$, denoted by $\varphi_0$, exists in $\Real\cup \set{-\infty}$. Moreover, $\varphi_0$ is an upper semi-continuous function satisfying
	\[
		\Delta_0 \varphi_0 = \mu -1
	\]
	in the sense of distributions. 

\item
$\varphi(t)$ converges to $\varphi_0$ in $L^1$ as $t\downto 0$.

\item
For any $x\in S^2$, we have
    \begin{equation}
        \label{eqn:characterize}
        \varphi_0(x)= \lim_{r\downto 0} \fint_{B_r(x)} \varphi_0\, d\mu_{g_0}.
    \end{equation}
    It is possible that both sides of the above equation are minus infinity.

\cmt{
The notation $B_r$ could mean geodesic ball or stereographic ball.}

\item
By setting $\varphi(0)=\varphi_0$, $\varphi$ as a function on $S^2\times [0,1/2)$ is upper semi-continuous.
\item
For any $p>1$, there exist $\delta>0$ and $C>0$ such that
	\[
		\int_{S^2} e^{p \abs{\varphi(t)}} d\mu_{g_0} \leq C, \qquad \forall\, 0<t<\delta.
	\]
\end{enumerate}
\end{lem}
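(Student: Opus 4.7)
The plan is to mimic the construction of Lemma \ref{potential_construct_lem} on the compact surface $S^2$, where global solvability of the Poisson equation removes the need for the $R\upto R_*$ localisation. Take $\tau=1/4$. Since $\int_{S^2}(u(\tau)-(1-2\tau))\,d\mu_{g_0}=0$ by \eqref{eqn:area}, standard elliptic theory on $S^2$ produces a smooth $\varphi_\tau$ with $\Delta_0\varphi_\tau=u(\tau)-(1-2\tau)$. Define
\[
\varphi(x,t):=\varphi_\tau(x)+\int_\tau^t\log u(x,s)\,ds,\qquad t\in(0,1/2).
\]
Applying $\Delta_0$ and using the Ricci flow equation $\partial_t u=\Delta_0\log u-2$ verifies (1); differentiating in $t$ and substituting gives (2).

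For parts (3)--(5) I would cover $S^2$ by finitely many coordinate discs and apply Lemma \ref{potential_sublem} in each. The hypothesis $u\geq \varepsilon t$ is \eqref{eqn:uct}, and $\int_0^\tau\!\int_{S^2}|u|\,d\mu_{g_0}\,dt<\infty$ follows from \eqref{eqn:area}. This yields the pointwise limit $\varphi_0$, its upper semi-continuity, membership in $L^1(S^2)$, and the $L^1$ convergence. The distributional identity $\Delta_0\varphi_0=\mu-1$ of (3) follows by testing against $\xi\in C^\infty(S^2)$: integration by parts gives $\int\varphi(t)\Delta_0\xi\,d\mu_{g_0}=\int\xi(u(t)-(1-2t))\,d\mu_{g_0}$, whose two sides converge as $t\downto 0$ to $\int\varphi_0\Delta_0\xi\,d\mu_{g_0}$ (by $L^1$ convergence) and $\int\xi\,d\mu-\int\xi\,d\mu_{g_0}$ (by $u(t)\,d\mu_{g_0}\weakto\mu$). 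The inequality $\Delta_0\varphi(t)\geq-(1-2t)\geq-1$ falls short of subharmonicity, but in local flat coordinates gives $\Delta\varphi\geq-C$, so the mean value inequality carries only an $O(r^2)$ correction that vanishes with $r$; combined with upper semi-continuity this yields \eqref{eqn:characterize}. Part (6) follows by the same mean-value-with-correction argument as in Lemma \ref{upper_semi_lem}: for $(x_n,t_n)\to(x,0)$, $\varphi(x_n,t_n)\leq \fint_{B_r(x_n)}\varphi(t_n)\,d\mu_{g_0}+Cr^2$, which tends to $\fint_{B_r(x)}\varphi_0\,d\mu_{g_0}+Cr^2$ as $n\to\infty$ (by $L^1$ convergence) and then to $\varphi_0(x)$ as $r\downto 0$ (by \eqref{eqn:characterize}).

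The main obstacle is (7), for which I would invoke a Brezis--Merle type inequality. Given $p>1$, fix $\alpha>0$ small enough that $p\alpha<4\pi$, and cover $S^2$ by finitely many geodesic discs $B_i$ with $\mu(\overline{B_i})<\alpha/3$ and $g_0$-area likewise small (possible because $\mu$ is nonatomic on compact $S^2$). Weak convergence $u(t)\,d\mu_{g_0}\weakto\mu$ then yields $\delta>0$ such that $\int_{B_i}u(t)\,d\mu_{g_0}<\alpha/3$ for all $i$ and all $t\in(0,\delta)$. On each $B_i$ split $\varphi(t)=\varphi_i^{\mathrm{loc}}(t)+h_i(t)$, where $\varphi_i^{\mathrm{loc}}(t)$ is the Dirichlet solution of $\Delta_0\varphi_i^{\mathrm{loc}}(t)=u(t)-(1-2t)$ on $B_i$ with zero boundary data, and $h_i(t)$ is smooth and harmonic. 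The $L^1(B_i)$ norm of the right-hand side is at most $\alpha/3+\mu_{g_0}(B_i)<\alpha$, so Brezis--Merle gives $\int_{B_i}e^{p|\varphi_i^{\mathrm{loc}}(t)|}\,d\mu_{g_0}\leq C$ uniformly in $t\in(0,\delta)$. Interior estimates for the harmonic $h_i(t)$ bound $\|h_i(t)\|_{L^\infty(B_i')}$ on slightly smaller discs $B_i'\subset\subset B_i$ by $\|\varphi(t)\|_{L^1(B_i)}+\|\varphi_i^{\mathrm{loc}}(t)\|_{L^1(B_i)}$, both uniformly controlled. Choosing the $B_i'$ to still cover $S^2$ and combining the estimates gives (7).
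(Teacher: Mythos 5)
Your proposal is correct and follows essentially the same route as the paper: the same potential $\varphi(t)=\varphi(1/4)+\int_{1/4}^t\log u(s)\,ds$, local application of Lemma \ref{potential_sublem}, the same distributional computation for $\Delta_0\varphi_0=\mu-1$, and the same Brezis--Merle plus harmonic-splitting argument for part (7). The only (harmless) deviation is that where the paper restores exact subharmonicity by passing to $\tilde\varphi=\varphi+(1-2t)w$ with $\Delta_0 w=1$ before invoking part (3) of Lemma \ref{potential_sublem} and Lemma \ref{upper_semi_lem}, you work with $\varphi$ directly and absorb the bounded negative part of its Laplacian into an $O(r^2)$ error in the mean value inequality, which serves the same purpose.
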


The potential $\vph$  is unique up to the addition of a time-independent constant.


\begin{proof}
Solve the Poisson equation
\[
	\Delta_0 \varphi(1/4)= u(1/4) - 1/2
\]
and set, for $t\in (0,1/2)$,
\begin{equation}
    \label{eqn:s2varphi}
	\varphi(t) := \varphi(1/4) + \int_{1/4}^t \log u(s) ds.
\end{equation}
%
To see \eqref{eqn:uphi}, we mimic \eqref{vph_t_eq} and compute
\begin{eqnarray*}
	\Delta_0 \varphi(t) &=&\Delta_0 \varphi(1/4) + \int_{1/4}^t \Delta_0 \log u(s) ds \\
	&=& u(1/4)-1/2	+ \int_{1/4}^t (\partial_t u +2 ) ds\\
	&=&u(t)-1+2t.
\end{eqnarray*}
Differentiating \eqref{eqn:s2varphi} in time gives
$\partial_t \varphi = \log u$, which then implies
\eqref{eqn:phievolve}.

The claims in Parts (3)-(6) are either pointwise or local properties so  we may prove 
them in a local coordinate system. Fix $x_0\in S^2$ and 
pick local isothermal coordinates near $x_0$, scaled to exist in an open set containing $\overline{B}$.
Pick $w\in C^\infty(\overline{B})$ satisfying
\[
\Delta_0 w = 1 \qquad \text{on} \quad B,
\]
and set
\[
\tilde{\varphi}(t) = \varphi(t) + (1-2t) w.
\]
Hence,
\[
\Delta \tilde{\varphi}(t) = u_0 u(t) \geq 0
\]

where $u_0$ is the conformal factor of $g_0$, i.e. 
$d\mu_{g_0}=u_0 dx$
and 
setting
\[
\tilde{u}= u \cdot e^{-2w},
\]
\eqref{eqn:s2varphi} becomes
\[
\tilde{\varphi}(t)= \varphi(1/4)+ \frac{w}{2} + \int_{1/4}^t \log \tilde{u}(s) ds.
\]
In particular, we have $\tilde{\varphi}(1/4)=\varphi(1/4)+\frac{w}{2}$.
Keeping in mind \eqref{eqn:area} and \eqref{eqn:uct},
we may apply Lemma \ref{potential_sublem} with $T=1/2$, $\tau=1/4$, $R=1$ and with $\tilde{u}$ and $\tilde{\varphi}$ in place of $u$ and $\varphi$ therein. 
%
The local properties of $\tilde{\varphi}$ proved in Lemma \ref{potential_sublem} imply the claims for $\varphi$ in Parts (3)-(5) except
\[
\Delta_0 \varphi_0 =\mu -1. 
\]
To see this, we take any continuous function $\eta$ and compute
\begin{align*}
    \int_{S^2} \eta \Delta_0 \varphi_0 d\mu_{g_0} &= \lim_{t\downto 0} \int_{S^2} \Delta_0 \eta \varphi(t) d\mu_{g_0} \\
    &= \lim_{t\downto 0} \int_{S^2} \eta (u(t)-1+2t) d\mu_{g_0} \\
    &= \int_{S^2} \eta d\mu - \int_{S^2} \eta d\mu_{g_0}.
\end{align*}
For the proof of Part (6), we apply Lemma \ref{upper_semi_lem} to $\tilde{\varphi}$ and notice that the assumptions there are proved by our previous application of Lemma \ref{potential_sublem}.



For Part (7), we first find a cover of $S^2$ by balls $B^1, \cdots, B^m$, each of geodesic radius $r<\pi/8$, such that
\[
	\int_{B^j(4)} d\mu < \frac{\pi}{p},\qquad j=1,2,\cdots,m.
\]
Here $B^j(m)$ is the ball of the same center as $B^j$, but $m$ times the radius.
By Lemma 3.2 of \cite{TY3}, there is $\delta>0$ such that for any $t\in (0,\delta)$,
\[
	\int_{B^j(2)} u(t) d\mu_{g_0} \leq \frac{2\pi}{p}.
\]
We may assume that the radius of $B_j$ are small so that
\[
	\int_{B^j(2)} \abs{\Delta_0 \varphi(t)} d\mu_{g_0} \leq \frac{3\pi}{p}.
\]
Now, we apply a result of Brezis-Merle \cite{BM} to conclude that
\footnote{
	Here is how we apply Theorem 1 of \cite{BM}. Let $v$ be the harmonic function on $B^j(2)$ with boundary value $v|_{\partial B^j(2)}= \varphi(t)$. For small $\eta\in (0,\pi)$ satisfying
	\[
		\frac{(4\pi-\eta)}{\norm{\Delta \varphi(t)}_{L^1(B^j(2))}}>p,
	\]
 we have
\begin{equation}
	\label{eqn:phiv}
\int_{B^j(2)} \exp (p \abs{\varphi(t)-v}) d\mu_0\leq C(\delta, j).
\end{equation}
By Jensen's inequality, $\norm{\varphi(t)-v}_{L^1(B^j(2))}\leq C$. By (4) again, we know $\norm{v}_{L^1(B^j(2))}\leq C$. Since $v$ is a harmonic function, we obtain the $L^\infty$ of $v$ on $B^j$ and the desired inequality follows from \eqref{eqn:phiv}.
}
\[
	\int_{B^j} e^{p \abs{\varphi(t)}} d\mu_{g_0}\leq C.
\]
Adding these together, we get Part (7).
\end{proof}

\subsection{Comparison}

Given two solutions $u_1$ and $u_2$ of \eqref{uS2_eq} satisfying the same initial data $\mu$ with $\mu(S^2)=4\pi$, we apply Lemma \ref{lem:potentialflow} to get $\varphi_1$ and $\varphi_2$ respectively. By Parts (3) and (5) in that lemma, $\varphi_1- \varphi_2$ is a harmonic function on $S^2$, hence a constant. Moreover, by the upper semi-continuity, $\varphi_1$ and $\varphi_2$ are bounded from above. Therefore, we may assume by adding some constants that
\[
	\varphi_1(0)=\varphi_2(0)<0.
\]

Our goal is to show that $\vph_1(t)\equiv \vph_2(t)$ since that will give our desired conclusion that 
$u_1\equiv u_2$.

\begin{lem}
\label{lem:c2}
	For any $t\in (0,1/2)$, $\varphi_i(t)<0$ for $i=1,2$.
\end{lem}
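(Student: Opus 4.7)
The plan is to apply a parabolic maximum principle, exploiting the fact that at a spatial maximum of $\varphi_i(\cdot,t)$ one has $\Delta_0 \varphi_i \leq 0$, so the evolution equation \eqref{eqn:phievolve} forces
$$\partial_t \varphi_i \;=\; \log(\Delta_0 \varphi_i + 1 - 2t) \;\leq\; \log(1 - 2t) \;<\; 0$$
for $t \in (0,1/2)$. Because the normalisation gives $\sup_{S^2}\varphi_0 < 0$, this `strictly decreasing at spatial maxima' behaviour should keep $\varphi_i$ negative throughout.

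To carry this out, fix $i \in \{1,2\}$, write $\varphi = \varphi_i$, and set $M(t) := \max_{S^2}\varphi(\cdot,t)$, which is continuous on $(0,1/2)$ by smoothness of $\varphi$. Since $\varphi_0$ is upper semi-continuous on the compact space $S^2$, its supremum is attained and equals some $-c$ with $c > 0$. Using the space-time upper semi-continuity of $\varphi$ on $S^2 \times [0,1/2)$ provided by Part (6) of Lemma \ref{lem:potentialflow}, together with compactness of $S^2 \times \{0\}$, I would first extract a uniform $\tau > 0$ such that $M(t) \leq -c/2$ for all $t \in [0,\tau)$: otherwise one could choose $t_n \downto 0$ and $x_n \in S^2$ with $\varphi(x_n, t_n) = M(t_n) > -c/2$, then pass to a subsequential limit $x_n \to x_*$ and invoke upper semi-continuity to get $\varphi_0(x_*) \geq -c/2$, contradicting $\sup \varphi_0 = -c$.

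Suppose now, for contradiction, that $M(t) \geq 0$ for some $t \in (0,1/2)$, and define
$$t_* := \inf\{t \in (0,1/2) \,:\, M(t) \geq 0\}.$$
Then $t_* \geq \tau > 0$, and by continuity of $M$ on $(0,1/2)$ together with the definition of $t_*$, we must have $M(t_*) = 0$, attained at some $x_* \in S^2$. At $(x_*, t_*)$ the spatial maximality gives $\Delta_0 \varphi(x_*, t_*) \leq 0$. Meanwhile, $\varphi(x_*, t) \leq M(t) < 0 = \varphi(x_*, t_*)$ for $t$ slightly less than $t_*$ forces $\partial_t \varphi(x_*, t_*) \geq 0$. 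But \eqref{eqn:phievolve}, whose logarithm is well-defined since its argument equals the strictly positive conformal factor $u(x_*, t_*)$, yields
$$\partial_t \varphi(x_*, t_*) \;=\; \log\bigl(\Delta_0\varphi(x_*,t_*) + 1 - 2t_*\bigr) \;\leq\; \log(1 - 2t_*) \;<\; 0,$$
the desired contradiction. Hence $M(t) < 0$ for all $t \in (0,1/2)$.

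The main obstacle is bridging the gap between the merely upper semi-continuous (and possibly $-\infty$-valued) initial datum $\varphi_0$ and the classical maximum principle argument at later times, since \emph{a priori} the max of $\varphi(t)$ could jump up from $t = 0$ on an arbitrarily short interval. This is precisely what the uniform initial control $M(t) \leq -c/2$ on $[0,\tau)$ addresses: it rules out $t_* = 0$ and lets us run the standard first-touching-time argument on a genuinely interior space-time point where $\varphi$ is smooth.
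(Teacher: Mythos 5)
Your proposal is correct and follows essentially the same route as the paper: use the upper semi-continuity of $\varphi_i$ on $S^2\times[0,1/2)$ (Part (6) of Lemma \ref{lem:potentialflow}) together with $\varphi_i(0)<0$ to keep $\varphi_i$ negative for a short initial time, then run a first-touching-time maximum principle argument where $\Delta_0\varphi_i\leq 0$ and $\partial_t\varphi_i\geq 0$ contradict $\partial_t\varphi_i=\log(\Delta_0\varphi_i+1-2t)\leq\log(1-2t)<0$. Your write-up merely makes explicit the extraction of the first touching point $(x_0,t_0)$ that the paper states more tersely.
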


\begin{proof}
    By Part (6) of Lemma \ref{lem:potentialflow} and the assumption that $\varphi_i(0)<0$, we can find small $\delta>0$ such that
    \[
    \varphi_i(t)<0\qquad \forall t\in (0,\delta].
    \]
    If the lemma is not true (for some $i$), then we may find some $(x_0,t_0)\in S^2\times (0,1/2)$ satisfying
    \[
    \varphi_i(x_0,t_0)=0
    \]
    and
    \[
    \varphi_i(x,t)<0, \qquad \forall x\in S^2, \, t\in (0,t_0).
    \]
    Since $\varphi_i$ is smooth for $t>0$, we have $\partial_t \varphi_i (x_0,t_0)\geq 0$ and $\Delta_0 \varphi_i(x_0,t_0)\leq 0$, which is a contradiction to the equation
	\[
		\partial_t \varphi_i = \log(\Delta_0 \varphi_i +1-2t).
	\]
\end{proof}

If both $u_1$ and $u_2$ are smooth on $S^2\times [0,1/2)$, then by the well-known uniqueness of smooth Ricci flow on compact manifolds, we always have $u_1=u_2$. Or, one may apply the classical maximum principle to \eqref{eqn:phievolve} to see that $\varphi_1=\varphi_2$ on $S^2\times [0,1/2)$.

When the initial data is only a measure, we know from Lemma \ref{lem:potentialflow} that $\varphi_i(0)$ may not be bounded from below so that the classical maximum principle argument will not work. We use the same argument as in Di Nezza and Lu to solve this problem.

\begin{lem}[Lemma 5.1 in \cite{DL}]
	\label{lem:deeper}
	Let $\varphi$ be $\varphi_1$ or $\varphi_2$. We have
	\begin{equation}
		\label{eqn:deeper}
		\varphi(t)\geq (1-4t)\varphi(0)-\alpha(t), \qquad \forall\, t\in (0,1/4)
	\end{equation}
where $\alpha(t)$ is a positive function satisfying $\lim_{t\downto 0}\alpha(t)=0$.	
\end{lem}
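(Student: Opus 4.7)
The plan is to adapt the proof of \cite[Lemma 5.1]{DL}: first establish the inequality with $\varphi(s,\cdot)$ (a smooth function for each $s>0$) in place of $\varphi_0$, and then pass to the limit $s\downto 0$ using the pointwise convergence of $\varphi(s,\cdot)$ to $\varphi_0$ supplied by Lemma \ref{lem:potentialflow}(3).

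Concretely, for $s\in (0,1/8)$ and an auxiliary function $\alpha:[0,1/4)\to [0,\infty)$ with $\alpha(0)=0$ to be determined, I would consider the test quantity
\[
G_s(\tau, x) := \varphi(s+\tau, x) - (1-4\tau)\varphi(s, x) + \alpha(\tau), \qquad (\tau,x)\in [0, 1/4-s)\times S^2,
\]
so that $G_s(0,\cdot)\equiv 0$. Using \eqref{eqn:uphi} and \eqref{eqn:phievolve}, a direct computation gives
\[
\partial_\tau G_s = \log u(s+\tau) + 4\varphi(s) + \alpha'(\tau), \qquad \Delta_0 G_s = u(s+\tau) - (1-4\tau) u(s) - 2\tau(1-4s).
\]
In particular, $\Delta_0 G_s\geq 0$ forces $u(s+\tau)\geq 2\tau(1-4s)$, so the logarithm below makes sense. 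I would then apply the classical maximum principle to $G_s$: suppose $G_s$ becomes negative somewhere and let $(\tau_0, x_0)$ be a first-time spatial minimum with $G_s(\tau_0, x_0) = 0$ and $\tau_0 > 0$; the conditions $\partial_\tau G_s\leq 0$ and $\Delta_0 G_s\geq 0$ at $(\tau_0, x_0)$ combine to give
\[
\log[2\tau_0(1-4s)] \;\leq\; \log u(s+\tau_0, x_0) \;\leq\; -4\varphi(s, x_0) - \alpha'(\tau_0).
\]
Using the identity $G_s(\tau_0,x_0)=0$ to re-express $\varphi(s,x_0)$ in terms of $\varphi(s+\tau_0, x_0)$ and $\alpha(\tau_0)$, combined with the upper bound $\varphi(s+\tau_0, x_0)\leq 0$ from Lemma \ref{lem:c2} and the uniform exponential integrability from Lemma \ref{lem:potentialflow}(7), I would then choose $\alpha$ (schematically of size $\tau|\log\tau|$, independent of $s$) so as to contradict the displayed inequality. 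Having established
\[
\varphi(s+\tau, x) \;\geq\; (1-4\tau)\varphi(s, x) - \alpha(\tau) \qquad \forall\,s\in(0,1/8),\ \tau\in(0,1/4-s),\ x\in S^2,
\]
I fix $t\in (0,1/4)$, set $\tau = t - s$, and send $s\downto 0$; Lemma \ref{lem:potentialflow}(3) ensures pointwise convergence $\varphi(s, x)\to \varphi_0(x)\in\R\cup\{-\infty\}$, so together with continuity of $\alpha$ one obtains the limiting inequality $\varphi(t, x) \geq (1-4t)\varphi_0(x) - \alpha(t)$.

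The main obstacle will be producing a choice of $\alpha$ that is uniform in $s$. A naive pointwise application of the maximum principle leaves the quantity $|\varphi(s, x_0)|$ uncontrolled at the minimum point, and this quantity is genuinely unbounded as $s\downto 0$, because $\varphi_0$ is only $L^1_{loc}$ in general. Overcoming this requires coupling the constraint $G_s(\tau_0, x_0)=0$ (which ties $\varphi(s, x_0)$ to $\varphi(s+\tau_0, x_0)$ and $\alpha(\tau_0)$) with the global bound $\int_{S^2} e^{p|\varphi(t)|}\,d\mu_{g_0}\leq C$ from Lemma \ref{lem:potentialflow}(7); this Moser--Trudinger-type integrability is what substitutes for the missing pointwise control, mirroring the corresponding step in the K\"ahler-Ricci setting of \cite{DL}.
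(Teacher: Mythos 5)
There is a genuine gap at exactly the point you flag as ``the main obstacle'': your sketch of how to overcome it does not work, and the missing ingredient is the auxiliary semi-linear equation. At your touching point $(\tau_0,x_0)$ the two derivative inequalities give $\log[2\tau_0(1-4s)]\leq -4\varphi(s,x_0)-\alpha'(\tau_0)$, and to contradict this you need a pointwise \emph{lower} bound on $\varphi(s,x_0)$, i.e. an upper bound on $-4\varphi(s,x_0)$, of size roughly $|\log \tau_0|$, uniform in $s$. Neither of your two proposed sources supplies this: the constraint $G_s(\tau_0,x_0)=0$ together with $\varphi(s+\tau_0,x_0)\leq 0$ (Lemma \ref{lem:c2}) yields only an \emph{upper} bound $\varphi(s,x_0)\leq \alpha(\tau_0)/(1-4\tau_0)$, which is the wrong direction; and the exponential integrability of Lemma \ref{lem:potentialflow}(7) is an integral bound, which controls the measure of the set where $\varphi(s)$ is very negative but says nothing about its value at the particular point $x_0$. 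Since $\varphi(s,\cdot)\to\varphi_0$ pointwise and $\varphi_0$ genuinely takes the value $-\infty$ in general, the touching point can sit where $\varphi(s,\cdot)$ is arbitrarily negative, the right-hand side $-4\varphi(s,x_0)-\alpha'(\tau_0)$ is then huge, and no choice of $\alpha$ independent of $s$ closes the contradiction. In effect you are trying to prove the lemma by assuming a pointwise lower bound of the same strength.

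The paper (following Di Nezza--Lu) gets around precisely this by never requiring a pointwise bound on $\varphi(\varepsilon)$: for each $\varepsilon$ it solves the Kazdan--Warner-type equation $\Delta_0 w_\varepsilon=e^{w_\varepsilon-4\varphi(\varepsilon)}-1$ (Lemma \ref{lem:kw}), whose solution has a $C^0$ bound depending only on the uniform $L^p$ bound of $e^{-4\varphi(\varepsilon)}$ coming from Lemma \ref{lem:potentialflow}(7), and then uses the explicit barrier $\Phi_\varepsilon(t)=(1-4t)\varphi(\varepsilon)+t\,w_\varepsilon+t\log t-t$. The identity $\Delta_0 w_\varepsilon+1=e^{w_\varepsilon-4\varphi(\varepsilon)}$ makes the unbounded term $-4\varphi(\varepsilon)$ in $\partial_t\Phi_\varepsilon$ cancel \emph{identically} inside the logarithm, so $\Phi_\varepsilon$ is a subsolution of the equation satisfied by $\varphi(\varepsilon+\cdot)$, both have the same smooth initial value $\varphi(\varepsilon)$, and the classical maximum principle applies with no unboundedness issue; the uniform-in-$\varepsilon$ error $\alpha(t)$ then comes from $t\norm{w_\varepsilon}_{C^0}+|t\log t-t|$. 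This elliptic auxiliary step (the analogue of the Monge--Amp\`ere equation with Ko\l{}odziej's estimate in \cite{DL}) is the idea your proposal is missing, and without it the maximum-principle comparison you set up cannot be completed.
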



\begin{proof}
Fix some $p>1$. For any $\varepsilon>0$ smaller than $\min (\delta,1/8)$ for the $\delta$ in Part (7) of Lemma \ref{lem:potentialflow},  we have some $C>0$ independent of $\varepsilon$ such that
\begin{equation}
    \label{eqn:uniformlp}
\norm{e^{-4\varphi(\varepsilon)}}_{L^p}\leq C.
\end{equation}
By Lemma \ref{lem:c2}, we also have $4\varphi(\varepsilon)<0$.
 Hence, by setting $f=4\varphi(\varepsilon)$,
 Lemma \ref{lem:kw} implies the existence of $w_\varepsilon$ satisfying
\[
	\Delta_0 w_\varepsilon = e^{w_\varepsilon -4\varphi(\varepsilon)}-1
\]
with
\begin{equation}
    \label{eqn:uniformc0}
    \norm{w_\varepsilon}_{C^0(S^2)}\leq C'
\end{equation}
for some $C'$ depending on $p$ and the $C$ in \eqref{eqn:uniformlp}. 

Define $t\in (0,1/4)$, 
\begin{equation}
	\label{eqn:aux}
	\Phi_\varepsilon(t)= (1-4t) \varphi(\varepsilon) + t w_\varepsilon+ (t\log t- t).
\end{equation}
By direct computation, we have
\begin{eqnarray*}
	\partial_t \Phi_\varepsilon &=&   -4\varphi(\varepsilon) + w_\varepsilon + \log t \\
	&=& \log (t \Delta_0 w_\varepsilon + t)
\end{eqnarray*}
and (using that $t\in (0,1/4)$)
\begin{eqnarray*}
	&&\Delta_0 \Phi_\varepsilon -2 (t+ \varepsilon)+ 1 \\
 &=&  (1-4t) \Delta_0 \varphi(\varepsilon) + t \Delta_0 w_\varepsilon +1 -2(t+\varepsilon)\\
	&=& (1-4t)(\Delta_0 \varphi(\varepsilon)+1-2\varepsilon) + t\Delta_0 w_\varepsilon + (1-2t-2\varepsilon)- (1-2\varepsilon)(1-4t)\\
	&\geq&  t\Delta_0 w_\varepsilon + t.
\end{eqnarray*}
Here in the last line above, we used \eqref{eqn:uphi} and the fact that $\varepsilon<1/8$ so that
\[
	(1-2t-2\varepsilon)-(1-2\varepsilon)(1-4t)=2t-8\varepsilon t\geq t.
\]
Hence,
\[
	\partial_t \Phi_\varepsilon \leq \log (\Delta_0 \Phi_\varepsilon - 2 (t+\varepsilon) +1).
\]
If we set $\Psi_\varepsilon(t)=\varphi(\varepsilon+t)$, then \eqref{eqn:phievolve} implies that
\[
	\partial_t \Psi_\varepsilon = \log (\Delta_0 \Psi_\varepsilon - 2 (t+\varepsilon) +1).
\]
Since $\Phi_\varepsilon(0)= \varphi(\varepsilon)=\Psi_\varepsilon(0)$, we apply the usual maximum principle to conclude
\[
	\Phi_\varepsilon(t)\leq \Psi_\varepsilon(t)=\varphi(\varepsilon+t)
\]
for $t\in [0, 1/4-\varepsilon)$. 

\cmt{everywhere else in the paper we were a lot more careful about the maximum principle in an unfamiliar setting like this. But I am happy leaving it like this because by now the reader would know that one could actually define
$\Psi_\varepsilon(t)=(1+t)\eta+\varphi(\varepsilon+t)$ for small $\eta>0$, then do the bare hands maximum principle, then let $\eta\downto 0$.}

For fixed $0<t<1/4$, taking $\varepsilon\to 0$ and using \eqref{eqn:uniformc0}, we obtain
\[
	\varphi(t)\geq (1-4t)\varphi(0) - \alpha(t).
\]
\end{proof}

We continue with our objective of proving that $\vph_1(t)\equiv \vph_2(t)$.

Instead of comparing $\varphi_1$ and $\varphi_2$, we set
\[
	\psi_i(t)= e^t \varphi_i(\frac{1-e^{-t}}{2}).
\]
Since $\varphi_i$ is defined on $[0,\frac{1}{2})$, $\psi_i$ is defined on $[0,\infty)$.
To compute the evolution of $\psi_i$, 
\begin{equation}
	\label{eqn:psi2}
\begin{split}
	\partial_t \psi_i &= \psi_i + \frac{1}{2} \partial_t \varphi_i(\frac{1-e^{-t}}{2})\\
	&= \psi_i +\frac{1}{2}\log \left(\Delta_0 \varphi_i (\frac{1-e^{-t}}{2}) +1 - 2(\frac{1-e^{-t}}{2})\right)\\
	&= \psi_i +\frac{1}{2}\log \left(\Delta_0 \varphi_i (\frac{1-e^{-t}}{2}) + e^{-t}\right)\\
	&= \psi_i - \frac{t}{2}+\frac{1}{2}\log \left(\Delta_0 \psi_i(t) + 1\right).
\end{split}	
\end{equation}

Let $\alpha(t)$ be the function given in Lemma \ref{lem:deeper}. For  $\varepsilon>0$ small enough so that
\beq\label{ep_constraints}
\textstyle \frac{1}{2}(1-e^{-\varepsilon})\in (0,1/4)\qquad\text{and}\qquad
2-e^\varepsilon\in (0,1),
\eeq
we set
\[
	\xi(t)= \psi_1(t+\varepsilon) + \frac{1}{2}\varepsilon(e^t-1) + e^{t+\varepsilon} \alpha(\frac{1-e^{-\varepsilon}}{2}),
\]
defined for $t>0$, and compute
\begin{equation*}
\begin{aligned}
\partial_t \xi &= \partial_t \psi_1(t+\varepsilon) + \frac{1}{2}\varepsilon e^t + e^{t+\varepsilon} \alpha(\frac{1-e^{-\varepsilon}}{2}) \\
&= \psi_1(t+\varepsilon) -\frac{(t+\varepsilon)}2 +\frac{1}{2}\log(\Delta_0 \psi_1(t+\varepsilon)+1) + \frac{1}{2}\varepsilon e^t + e^{t+\varepsilon} \alpha(\frac{1-e^{-\varepsilon}}{2}) \\
&= \xi(t) -\frac{t}{2} + \frac{1}{2}\log (\Delta_0 \xi(t) +1).
\end{aligned}
\end{equation*}
Hence, $\xi$ is a classical solution to \eqref{eqn:psi2} and
\[
	\xi(0)=\psi_1(\varepsilon)+ e^\varepsilon \alpha(\frac{1-e^{-\varepsilon}}{2})= e^\varepsilon \varphi_1 (\frac{1-e^{-\varepsilon}}{2}) + e^{\varepsilon} \alpha(\frac{1-e^{-\varepsilon}}{2}).
\]
Applying Lemma \ref{lem:deeper} with $t=\frac{1}{2}(1-e^{-\varepsilon})$, 
keeping in mind the first part of \eqref{ep_constraints}, we have
\[
	\varphi_1(\frac{1-e^{-\varepsilon}}{2}) \geq \left(1-4(\frac{1-e^{-\varepsilon}}{2})\right) \varphi_1(0) - \alpha(\frac{1-e^{-\varepsilon}}{2}).
\]
Hence,
\begin{equation}
    \label{eqn:xiphi2}
	\xi(0) \geq e^\varepsilon \left(1-4(\frac{1-e^{-\varepsilon}}{2})\right) \varphi_1(0) > \varphi_1(0)=\psi_2(0).
\end{equation}
Here in the last line above, we used 
$e^{\varepsilon}(1-4(\frac{1-e^{-\varepsilon}}{2}))=2-e^\varepsilon \in (0,1)$,
by \eqref{ep_constraints}, and the fact that $\varphi_1(0)<0$.

Since $\xi$ and $\psi_2$ satisfy the same equation with the initial values satisfying \eqref{eqn:xiphi2}, an application of the maximum principle will imply that for all $x\in S^2$ and $t>0$,
\begin{equation}
    \label{eqn:maximum}
\xi(x,t)\geq \psi_2(x,t).
\end{equation}
If otherwise, we have $(x_1,t_1)$ satisfying
\[
\xi(x_1,t_1)< \psi_2(x_1,t_1).
\]
Pick a small $\delta>0$ such that
\[
\xi(x_1,t_1)+ \delta (e^{t_1}-1)< \psi_2(x_1,t_1).
\]
Now, we claim the existence of $(x_0,t_0)\in S^2\times (0,t_1]$ such that
\[
\xi(x_0,t_0)+ \delta (e^{t_0}-1)= \psi_2(x_0,t_0)
\]
and for all $x\in S^2$ and $t\in (0,t_0)$,
\[
\xi(x,t)+ \delta (e^t-1) > \psi_2(x,t).
\]
If the claim is not true, then we have a sequence of $(y_i,s_i)$ satisfying $y_i\to y_0$, $s_i\downto 0$ and
\[
\xi(y_i,s_i)+ \delta (e^{s_i}-1)\leq \psi_2(y_i,s_i).
\]
By the smoothness of $\xi$ at $t=0$ and the upper semi-continuity of $\psi_2$ (a property inherited from $\varphi_2$), we take the limit $i\to \infty$ in the above inequality to get a contradiction to \eqref{eqn:xiphi2}, which proves the claim.

The existence of such $(x_0,t_0)$ implies
\[
\partial_t \xi (x_0,t_0) + \delta e^{t_0} \leq \partial_t \psi_2(x_0,t_0)
\]
and
\[
\Delta_0 \xi(x_0,t_0) \geq \Delta_0 \psi_2(x_0,t_0).
\]
These inequalities contradict the equation satisfied by both $\xi$ and $\psi_2$ (see \eqref{eqn:psi2}) and prove \eqref{eqn:maximum}.


Finally, for any fixed $t>0$, taking $\varepsilon$ to $0$ in \eqref{eqn:maximum}, we obtain
\[
	\psi_1\geq \psi_2,\qquad \text{on} \quad S^2\times[0,1/8).	
\]
By symmetry, we have $\psi_1=\psi_2$, hence $\varphi_1=\varphi_2$.



\noindent
\url{https://homepages.warwick.ac.uk/~maseq/}\\
\noindent
{\sc Mathematics Institute, University of Warwick, Coventry,
CV4 7AL, UK.} 

{\sc School of Mathematical Sciences, University of Science and Technology of China, Hefei, 230026, P.R.China} 

\end{document}